\definecolor{my-blue}{rgb}{0.0,0.0,0.6}
\definecolor{my-red}{rgb}{0.5,0.0,0.0}
\definecolor{my-green}{rgb}{0.0,0.5,0.0}
\definecolor{nicos-red}{rgb}{0.75,0.0,0.0}
\definecolor{light-gray}{gray}{0.6}
\definecolor{really-light-gray}{gray}{0.8}
\definecolor{sussexg}{rgb}{0.0,0.5,0.5}
\definecolor{sussexp}{rgb}{0.5,0.0,0.5}
\newtheorem{theorem}{\sc \color{sussexg}Theorem}[section]
\newtheorem{corollary}[theorem]{\sc \color{nicos-red} Corollary}
\numberwithin{equation}{section}
\theoremstyle{remark}
\newtheorem{remark}[theorem]{\color{sussexp} Remark}
\newcommand{\be}{\begin{equation}}
\newcommand{\ee}{\end{equation}}
\def\bE{\mathbb{E}}
\def\bN{\mathbb{N}}
\def\bP{\mathbb{P}}
\def\bR{\mathbb{R}}
\def\R{\bR}
\def\N{\bN}
\def\W{{\bf W}}
\def\fS{{\bf S}}
\def\E{\bE}
\def\P{\bP} %% environment measure 
\definecolor{darkgreen}{rgb}{0.0,0.5,0.0}
\definecolor{darkblue}{rgb}{0.0,0.0,0.3}
\definecolor{nicosred}{rgb}{0.65,0.1,0.1}
\definecolor{light-gray}{gray}{0.7}
\title{A restless Time-Fractional Multiclass Queue}
\author{Nicos Georgiou}
\address{Nicos Georgiou\\ University of Sussex\\ 
Mathematics Department\\ Pevensey II\\ Falmer campus, BN19QH} 
\email{n.georgiou@sussex.ac.uk}
\author{Enrico Scalas}
\address{Enrico Scalas\\ University of Rome 1-La Sapienza\\ 
Statistics Department} 
\email{enrico.scalas@uniroma1.it}
\author{Vladislav Vysotsky}
\address{Vladislav Vysotsky \\ University of Sussex\\ 
Mathematics Department\\ Pevensey II\\ Falmer campus, BN19QH} 
\email{v.vysotskiy@sussex.ac.uk}
\keywords{Mittag-Leffler queues, multiclass queue, fractional queue, fractional Poisson process}
\subjclass[2020]{Primary 60K25, 60F17. Secondary 60G55, 60G51}
\date{\today}
\begin{document}

\title[Fractional Multiclass queues]{A Restless Time-Fractional Multiclass Queue}

\begin{abstract}
We study a single-server priority queue with a finite number of classes, in which the arrivals follow a fractional Poisson process of index $\alpha \in (0,1]$ and the service completions are triggered by an independent fractional Poisson process of index $\beta \in (0,1]$. Each of the customers arriving is assigned at random to one of the priority classes. This assignment is independent of the rest of the system and follows a fixed probability distribution. 

Using a time-change representation of a fractional Poisson process, we first give a multinomial thinning decomposition: the total number of arrivals in each class are independent standard Poisson processes of appropriate intensities, time-changed by a common independent random clock that is the inverse of an $\alpha$-stable subordinator. This yields a process-level law of large numbers and a functional central limit theorem for the process of arrivals.

For the queueing system itself, we identify process-level scaling limits for the cumulative and individual queue lengths of the classes. We also prove that the queue gets empty infinitely often when $\alpha \le \beta$, which does include the critical case $\alpha = \beta$. A final example shows how the model can be extended to a continuum of classes.
\end{abstract}

\keywords{Mittag-Leffler queues, multiclass queue, fractional queue, fractional Poisson process}

\maketitle

\tableofcontents

\section{Introduction and preliminaries}  

The theory of queues gives foundational mathematical tools for rigorous modelling of congestion across different disciplines including computing, communications, logistics, and healthcare. Classical models such as M/M/1 and their GI/GI/1 generalisations are analytically tractable and offer insight, as well as genuine modelling methodologies for practitioners, through a vast body of literature \cite{ChenYao2001,Asmussen2003,Robert2003}.
In the studies of such models, the common assumptions are the memoryless Markovian properties or the light-tailness of distributions of the service times or arrival times. The performance and stability of such queues are essentially governed by the moments of these times. Of  particular importance is the notion of `traffic intensity',  through which the stability properties of the queue such as  recurrence, transience, laws of large numbers or fluctuations of the queue length are completely characterised.  \

Empirical evidence, however, shows that the real arrivals and services are often bursty and long-range dependent (for example the autocorrelation of the queue length could decay as a power law),
as opposed to the classical GI/GI/1 queues.

This creates a need to develop a theory for queues in which the distributions of services or arrivals (or both) create long-range effects. One way to construct such queues is to enforce the inter-arrival or service times to be heavy-tailed, in the sense of having infinite mean. This behaviour has already been observed in examples \cite{RabertoScalasMainardi2002, 
SabatelliKeatingDudleyRichmond2002}. 
Such changes 
alter the behaviour of all quantities of interest in the queue, such as system delays, tail asymptotics for waiting times, order of queue lengths, and busy periods as the process evolves in time
\cite{LelandTaqquWillingerWilson1994,PaxsonFloyd1995,FossKorshunovZachary2011}. 
 In such queueing examples the well-trodden traffic intensity approach loses its meaning and one must characterise regimes via scaling limits and reflected-input structures rather than expectations of arrival and service times \cite{AscioneCaputo, ButtGeorgiouScalas2022}.

The easiest way to obtain non-Markovian queues (i.e.\ non-M/M/1) is by changing the distribution of the arrival or the service times to something non-exponential. A standard choice here are random times $X$ with the tail behaviour $\P\{ X >x \} \sim \ell(x) x^{-\theta}$ for some constant $\theta>0$ and a function $\ell$ that is slowly varying at infinity. Among many types of such  distributions is the Mittag-Leffler distribution of index $\theta \in (0,1)$, or ML($\theta$) in short, for which $\ell(x) \equiv 1/\Gamma(1-\theta)$ is constant and the mean is infinite \cite{GorenfloKilbasMainardiRogosin2014}. This heavy-tailed distribution has a broad mathematical interest due to its tractable analytical properties \cite{Pillai1990}.  

There are two basic properties that make it particularly useful in applications to probability. The first one is that the counting process for a renewal process with the ML($\theta$) inter-event times 
has the same distribution as a classical Poisson process time-changed by an independent stochastic process called the inverse $\theta$-stable subordinator. This property lends the name (time-) \emph{fractional Poisson Process} (FPP) of index $\theta \in (0,1]$ to this counting process. This time-change representation was especially useful for establishing limit theorems for FPPs and showcasing properties that arise from the non-Markovianity of the FPPs, including tails, moments and the temporal evolution \cite{Laskin2003,MeerschaertNaneVellaisamy2011,MeerschaertSikorskii2012}. 

The second important property is that the forward Kolmogorov equations for the classical Poisson process remain valid for the FPP of index $\theta$ if the usual derivative of the transition probabilities is replaced by the Caputo derivative of order $\theta$. 

The family of Mittag-Leffler distributions extends to the case $\theta = 1$. For this particular case the distribution does not exhibit power-law tails but
it coincides with the exponential distribution, so the FPP becomes a usual Poisson process, and the fractional Kolmogorov equations turn into the classical ones since the Caputo derivative of order $\theta=1$ is the usual derivative. For this reason, the ML distribution is considered a natural heavy-tailed generalisation of the exponential distribution. This observation is useful when one wants to change a model that involves exponential distribution or a Poisson process to make this model heavy-tailed.

Let us discuss what this change might give in the context of modifying the classical M/M/1 queue with, say, equal arrival and service times rates and FIFO discipline.  The M/M/1 queue can be represented in the following four equivalent ways using the same underlying discrete time Markov chain on $\{0,1, \ldots\}$ that chooses between arrivals and departures on each step.

{\bf Model 1:} At the event times of a standard Poisson process, the queue length (i) increases by $1$ with probability $1/2$, or (ii) with probability $1/2$, either decreases by $1$ if at least one customer is present, or stays~$0$ if there are no customers.

\medskip

{\bf Model 2:} Consider two independent sequences of independent Exp(1/2) times $\{A_i\}_{i \ge 1}$ and $\{B_i\}_{i \ge 1}$. Consider the sequence of independent times $T_i = A_i \wedge B_i \sim {\rm Exp}(1)$ to form a standard Poisson process with the inter-event times $T_i$. The queue length changes at these times so that a customer arrives (+1) at the $i$-th event if $A_i < B_i$, otherwise a customer leaves ($-1$) or the queue stays empty, as above.
\medskip

{\bf Model 3:} Use two independent Poisson(1/2) processes $N_a$ and $N_d$. Whenever $N_a$ has an event, then a customer arrives. Whenever $N_d$ has an event, a customer leaves ($-1$) or the queue stays empty.
\medskip

{\bf Model 4:} Use one Poisson(1/2) process $N_a$ for arrivals. The first customer in the queue exits after an independent $\rm Exp(1)$ service time passes. If the queue is empty, no service times run. 

One of these models shall be chosen if one wants to time-change the Poisson processes above to FPPs or the exponential distributions above to ML ones. Each choice leads to a different fractional queueing model, which  has its own type of behaviour and requires a different mathematical toolbox to tackle it. 

The fractional generalisation of Model 1 was studied most extensively because it provides an example of a time-fractional birth and death chain \cite{CahoyPolitoPhoha2015}. When a single FPP drives the dynamics, the fractional Kolmogorov equations for the queue length are readily available and an array of analytical techniques is available to study the system rigorously.
Similarly, more general queueing models with a single time change can be studied,
 such as fractional Erlang queues \cite{OrsingherPolito2011, 
AscioneLeonenkoPirozzi2020SPA,
AscioneLeonenkoPirozzi2021Chapter}. 

The fractional versions of Models 2 and 3 appeared in \cite{ButtGeorgiouScalas2022}. The former one is called the {\it fractional renewal queue} and is obtained by having independent ML times competing for the jump events. The latter one is  called the {\it restless Mittag-Leffler queue} \cite{AscioneCaputo}, where the two Poisson processes above are substituted by two independent FPPs. As discussed in the following, this model can have wasted service completions since the departure FPP runs even when no customers are present in the system. 
The fractional version of Model~4, developed in \cite{AscioneCaputo}, can be interpreted as a GI/GI/1 queue since its service times 
are now ML-distributed. 

There are several differences between these queueing systems that have been highlighted in both \cite{ButtGeorgiouScalas2022} and \cite{AscioneCaputo}. For example in fractional Model 2 there are indices for the competing ML distributions that make the inter-event times have finite mean. In \cite{AscioneCaputo} the scaling limits for the queue length in all these models were obtained, and then used to characterise their heavy-traffic behaviour.    

\subsection{Contributions and quick summary of results.}
In the current article we 
focus on a multi-class queueing model where both arrivals and service completions are event times of two independent fractional Poisson processes, which makes this the multiclass generalisation of fractional Model 3 above. 

In our multi-class single-server priority queue, the arrivals are generated by {\it thinning (or marking)} the jump times of an FPP of index $\alpha \in (0,1]$. Namely, each arrival is assigned at random to one of $K$ priority classes, according to some fixed probabilities $p_1,\dots,p_K$ and independently of the rest of the system. The service completions are generated by an independent FPP of index $\beta \in (0,1]$. Each jump event of this FPP triggers the immediate departure from the head of the highest-priority nonempty class, if such a non-empty class exists. When all classes are empty, the jump event has no effect. 
Therefore, a jump time
of the departure FPP is not necessarily a service time. From a modelling perspective, this means that the server is actually working even if there are no customers in the queue, in which case the service during the whole empty period could potentially be wasted. Even if the first customer comes in just before a service completion, they will still exit when the departure process ticks, as part of the service was completed before their arrival. Since the fractional version of Model 3 above is called the restless Mittag-Leffler queue (see \cite{AscioneCaputo}), in this article we study the \emph{restless multiclass Mittag-Leffler queue.}

Our first contribution is an explicit structural representation of the class arrivals via a time-changed multinomial thinning of a standard Poisson process. This results in a multivariate process whose marginals, which count arrivals into the classes, are {\it dependent} FPPs represented by {\it independent} Poisson processes time-changed by a \emph{common} fractional clock. For example this immediately allows for calculations for the cross-covariances between the class arrivals and shows that they are positive. This is interesting since normally one would expect that if one class has a high count of marks, then the rest should have less marks, which tacitly suggests that covariances between classes are negative. And indeed, conditional on the total count of marks up to a given time, they are. However, unconditionally, the high variance of the original, unmarked FPP switches this intuition around. In the case where the FPP index is 1, the covariances become 0, and this result reduces to the classical property of Poisson processes that the marked processes are independent Poisson processes.  

Our thinned process is an instance of an existing multivariate fractional counting model \cite{BeghinMacci2016,BeghinMacci2017}. However, in our construction we \emph{derive} the dependence from marking the counting process rather than start with it from the definition. Moreover, the method of proof does not depend on the form of the time change; same methodology will work under very minor regularity conditions. 
A general consideration of thinning of FPPs and of the other counting processes can be found in \cite{mainardi2004}. A particular observation there uses a result from \cite{gnedenko1968}, where the authors construct a sequence of thinnings of heavy-tailed renewal processes, and prove that a limiting distribution of the inter-arrival times exists. Therefore the weak limit of their thinning sequences was also a renewal process. The distribution of the limiting inter-event times was recognised as the Mittag-Leffler distribution in \cite{mainardi2004} and showed that, up to a scaling, the FPP is a `stable point' of the thinning procedure of \cite{gnedenko1968}. This behaviour is further explored in our article, as the arrival process in each of the priority classes is a thinned FPP. 

Second, we prove a functional law of large numbers and a functional central limit theorem for the class arrivals using the multinomially thinned representation. These results provide process-level approximations that appear to be new for thinned fractional models and are directly usable in queueing limits.

Third, we analyze a \emph{multiclass} single-server queue driven by the above arrivals. We obtain scaling limits for aggregated and per-class queue lengths, and classify recurrence and transience of the queue. The effect of heavy tails in both services and arrivals (i.e.\ $\alpha<1$, $\beta<1$) is manifested in the scaling limits by a time-change of the classical results for the M/M/1 queue (with $\alpha=\beta=1$), and also in non-typical recurrence- and transience-type behaviours. Since the process is not Markovian, the notion of ``recurrence'' investigated in this article refers to whether the queue empties infinitely often with probability $1$ and ``transience'' refers to whether the limsup of the queue length is infinity with probability $1$.     

{\bf Structure of the article:}
In Sections \ref{subsec:bg} and \ref{subsec:tc} below 
one can find the necessary background on Mittag-Leffler random variables, FPPs and their time-change representations with inverse stable subordinators. For a more detailed literature see \cite{GorenfloKilbasMainardiRogosin2014}, and for a recent survey see \cite{VanMieghem2020ML}. 
In Section \ref{subsec:model} the restless fractional multiclass queue is introduced. 
%We call the arrival process the thinned FPP. 
Section \ref{sec:results} presents our results for the model, separated into the results for the multivariate arrivals process and the ones for the queueing system itself. 
The proofs for the thinned FPP are in Section \ref{sec:thinproof} while the proofs for the queue itself can be found in Section \ref{sec:qproofs}. Finally, in Section \ref{sec:doubleauction} there is an example that applies the results of the article in a queue with 
a continuum of classes.

{\bf Acknowledgments:} NG and VV acknowledge the support of the Dr Perry James (Jim) Browne Research Center at the Department of Mathematics, University of Sussex. NG acknowledges financial support provided by Sapienza University of Rome through the program Professori Visitatori 2025.

ES was partially supported by the National Recovery and Resilience Plan (NRRP), Mission 4, Component 2, Investment 1.1, Call for tender No. 104 published on 2.2.2022 by the Italian Ministry of University and Research (MUR), funded by the European Union – NextGenerationEU– Project Title “Non–Markovian Dynamics and Non-local Equations” – 202277N5H9 - CUP: D53D23005670006 - Grant Assignment Decree No. 973 adopted on June 30, 2023, by the Italian Ministry of University and Research (MUR). ES also acknowledges financial support provided by Sapienza University of Rome
000317 24 RICERCA UNIV 2023 PROG MEDI SCALAS - RICERCA ATENEO 2023 - SCALAS PROGETTI MEDI. The title of the project is ``Approximation of stochastic processes by means of sums of random telegraph processes”.

The authors would like to thank the anonymous referees for their comments and suggestions. NG would also like to thank Giacomo Ascione for discussions on fractional queueing systems and for early access to the preprint \cite{AscioneCaputo}.

\subsection{Fractional Poisson point process} 
\label{subsec:bg}

The two-parameter Mittag-Leffler function $E_{\beta, \zeta}(z)$ is defined by the power series 
\[
E_{\theta, \zeta}(z) = \sum_{\ell = 0}^\infty \frac{z^{\ell}}{\Gamma(\zeta + \theta \ell)},
\]
where $\theta>0$, $\zeta>0$, $z\in \mathbb{C}$. 
We will mostly require $\zeta = 1$, in which case we just write 
\[
E_{\theta, 1}(z) : = E_{\theta}(z).
\]
Note that $E_{1}(z)=e^z$, as seen from the Maclaurin series for the exponential function. 

The function 
\[
F_{\theta}(x) = 1 - E_{\theta}(-x^{\theta}), \qquad  x\ge 0,
\]
turns out to be the cumulative distribution function (CDF) of a non-negative random variable $X^\theta$ when $\theta \in (0,1]$ (see~\cite{Pillai1990}). For such $\theta$ we call $F_{\theta}$ the Mittag-Leffler cumulative distribution function. It is easy to check that it has density given by 
\[
f_{\theta}(x) = x^{\theta-1}E_{\theta, \theta}(-x^\theta), \qquad x >0.
\] 
In the particular case $\theta =1$ this is the standard exponential density. The reader interested in the derivatives of the Mittag-Leffler function and their numerical approximations can consult \cite{biolek2025}.

We now consider the Mittag-Leffler renewal process, that is a sequence of random variables of the form $\sum_{k=1}^n X^\theta_k$, $n \in \N$, where $X_k^\theta$ are i.i.d.\ inter-event times with the Mittag-Leffler distribution function $F_\theta$ for $\theta \in (0,1]$. The fractional Poisson process with index $\theta$, denoted by $\{N^\theta(t)\}_{t \ge 0}$,  counts the number of events of the Mittag-Leffler renewal process on the time interval $[0, t]$. That is,
\be \label{eq:renewaldef}
N^\theta(t)= n \qquad \Longleftrightarrow \qquad \sum_{k=1}^n X^{\theta}_k \le t < \sum_{k=1}^{n+1} X^{\theta}_k, 
\ee
where $n =0,1, \ldots$ This also implies that $N^\theta(0)=0$ and that 0 is a renewal point. We write 
\[
N^\theta(\cdot) \sim \text{FPP}(\theta).
\]
The single-time marginals of $N^\theta$ are  given by 
(see e.g. \cite{scalas2004})
\begin{equation}
\label{eq:fpd}
 \mathbb{P}(N^\theta(t)=n)= \frac{t^{\theta n}}{n!} E_\theta^{(n)}(-t^\theta),
\end{equation} 
where $E_\theta^{(n)}(-t^\theta)$ stands for  the $n$-th derivative of the function $E_\theta (z)$ evaluated at $z = -t^\theta$. 

For $\theta=1$, $X^1_k$ are standard exponential random variables, and therefore $N^1(t)$ is a standard Poisson process of rate $1$.
It is a Markov process and a Lévy process. We stress that $N^\theta(t)$ is neither a Markov process nor a Lévy process when $\theta \in (0,1)$.

We can add time-dilation by scaling the Mittag-Leffler inter-arrival random variables: for any $\lambda>0$, put
\be\label{eq:scaledML}
X^{\theta, \lambda}_k = \frac{1}{\lambda}X^\theta_k. 
\ee
These random variables have scaled density and CDF 
\[
f_{\theta,\lambda}(t) = \lambda^{\theta}t^{\theta-1}E_{\theta, \theta}(-\lambda^{\theta}t^\theta), \quad F_{\theta, \lambda}(t) = 1 - E_{\theta}(-\lambda^\theta t^{\theta}).
\]
The corresponding counting process is an FPP of index $\theta$ and scale $\lambda$. We denote it by $N^{\theta, \lambda}$ and write 
\[
N^{\theta, \lambda} \sim \text{FPP}(\theta, \lambda).
\]
Similarly, $N^{1,\lambda}$ is the usual Poisson process at rate $\lambda$. Utilising this notation we also have 
\[
N^{\theta,1}(t) = N^{\theta}(t).
\]

Moreover, from equations \eqref{eq:renewaldef} and \eqref{eq:scaledML} we have the equality of processes 
\be \label{eq:lamove}
N^{\theta, \lambda}(t) = N^{\theta}(\lambda t), \qquad t \ge 0. 
\ee

The probability generating function for the distribution \eqref{eq:fpd} is, for $\theta \in (0,1]$ and $s \in [0,1]$,
\begin{equation}\label{eq:probgenfunction}
G_\theta (s,t) = \mathbb{E}[s^{N^\theta(t)}] = E_\theta (-t^\theta (1-s));
\end{equation}
see \cite[Eq.~(35)]{Laskin2003}. Note that when $\theta=1$ this formula recovers the probability generating function of the standard Poisson process.  Moreover, equality \eqref{eq:probgenfunction} holds true for all $s \in \mathbb C$. To see this, note that the r.h.s.\ is an entire function of $s$ for fixed $t$ and $\theta$ because so is $E_{\theta, \zeta}(s)$; see \cite{Wiman1905Fund} for when $E_{\theta, \zeta}$ was introduced and used as an entire function, as well as \cite[p.~118]{erde} for a book reference. Then $E_\theta (-t^\theta (1-s))$ equals its Maclaurin series (in $s$), which has an infinite radius of convergence. This series is $\mathbb{E}[s^{N^\theta(t)}] = \sum_{n=0}^\infty \mathbb{P}(N^\theta(t)=n) s^n/n!$ for $s \in [0,1]$ by \eqref{eq:probgenfunction}, and this equality extends to all $s \in \mathbb C$ by the change of variables formula for the Lebesgue integral since the series converges absolutely.

Since $G_{\theta}(s,t)$ is an entire function, from \eqref{eq:probgenfunction} we get a formula for the Laplace transform by setting $s = e^{\eta}$:
\begin{equation}\label{eq:mgf}
M_\theta (\eta,t)= \mathbb{E}[e^{\eta N^\theta(t)}] = E_\theta (-t^\theta (1-e^\eta)), \quad \eta \in \R.
\end{equation} 

This formula will be used for the moment generating function of the (light-tailed) inverse stable subordinator; see \eqref{eq:subMGF}. Note that taking $\eta \in i \R$ above gives a formula for the characteristic function of $N^\theta(t)$.

Equation \eqref{eq:lamove} is sufficient to derive the following equalities: 
\be\label{eq:scaledpgf}
G_{\theta,\lambda}(s,t) = \E( s^{N^{\theta,\lambda}(t)})=  \E( s^{N^{\theta}(\lambda t)})= E_{\theta}(\lambda^\theta t^{\theta}(s-1)), \quad s\in \R,
\ee
and 
\be\label{eq:scaledmgf}
M_{\theta,\lambda}(\eta,t) = \E( e^{\eta N^{\theta,\lambda}(t)})=  \E( e^{\eta N^{\theta}(\lambda t)})= E_{\theta}(\lambda^\theta t^{\theta}(e^\eta-1)), \quad \eta \in \R.
\ee

\subsection{Time change}
\label{subsec:tc}
Let $L_\theta(s)$ be a standard $\theta$-stable subordinator for a $\theta \in (0,1]$, that is a non-decreasing non-negative L\'evy process with the Laplace transform  
\[\E (e^{-t L_\theta(s)})=e^{-s t^\theta} \text{ for } s, t\ge 0.\]
Recall that the increments of a L\'evy process are stationary and independent. 
%, with a law depending only on their lengths. 
The case $\theta =1$ is special, since $L_1(s)=s$ is purely deterministic. 

Furthermore, define the process
\be \label{eq:subordinator}
Y_\theta(t)=\inf\{s>0: L_\theta(s)>t\}, \qquad t \ge 0,
\ee
called the \emph{inverse $\theta$-stable subordinator}. Such inverse is usually referred to as the right-continuous one, but since a.s.\ (almost surely, i.e.\ with probability $1$) there is no interval on which $L_\theta$ is  constant, it follows that the trajectories of $Y_\theta$ are continuous a.s. This inverse process is self-similar with index $\theta$, that is for every $s\ge 0$, there is the following distributional identity for the processes:
\be \label{eq:Yselfsimilarity}
\{Y_\theta(st)\}_{t \ge 0} \stackrel{d}{=}s^\theta\{Y_\theta(t)\}_{t \ge 0}.
\ee 
The random variable $Y_\theta(1)$ has density when $\theta \in (0,1)$, which we denote by $h_{\theta}(x)$; by the self-similarity,
$Y_\theta(t)$ has density $t^{-\theta} h_\theta(x t^{-\theta})$ when $t >0$. For $\theta=1$, we have 
 $Y_1(t)=t$ for $t \ge 0$.
 
The inverse stable subordinator offers a very convenient distributional representation of the fractional Poisson process, namely
\be \label{eq:timechange}
\{N^{\theta}(t)\}_{t \ge 0} \stackrel{d}{=} \{N^1(Y_\theta(t)) \}_{t \ge 0},
\ee
where $N^1$ is the rate $1$ standard Poisson process that is independent of $Y_\theta(t)$. As such, an FPP$(\theta)$ is a time-changed standard Poisson process, where the change is given by the inverse $\theta$-stable subordinator. 

Then, from \eqref{eq:mgf} and  \eqref{eq:timechange} we can compute
\begin{align*}
E_\theta (t^\theta (e^s-1))&=\mathbb{E}[e^{sN^\theta(t)}]=\mathbb{E}[e^{sN^1(t^\theta Y_{\theta}(1))}]\\
&= \int_0^{\infty}E[e^{sN^1(t^\theta x)}| Y_{\theta}(1)=x ]h_{\theta}(x)\, dx
= \int_0^{\infty}\E[e^{sN^1(t^\theta x)}]h_{\theta}(x)\, dx \\
&= \int_0^{\infty} e^{xt^{\theta}(e^s-1)}h_{\theta}(x)\, dx = \E(e^{t^{\theta} Y_{\theta}(1)(e^s-1)}) =  \E(e^{ Y_{\theta}(t)(e^s-1)}).
\end{align*}
In particular, changing variables $u = e^s - 1$, we have that 
\be\label{eq:subMGF}
\mathbb{E}(e^{uY_{\theta}(t)}) = E_{\theta}(ut^{\theta}), \qquad u \in \R,
\ee
as well as the functional relation 
\be \label{eq:mgfsequality}
\E(e^{ Y_{\theta}(t)(e^s-1)}) = \mathbb{E}(e^{sN^\theta(t)}).
\ee

Using \eqref{eq:lamove}, \eqref{eq:timechange}, \eqref{eq:Yselfsimilarity} and again \eqref{eq:lamove} in order, 
we obtain the distributional identities for the process $N^{\theta, \lambda} \sim \text{FPP}(\theta, \lambda)$ below:
\be\label{eq:EquivReps} 
N^{\theta, \lambda}(t) \stackrel{d}{=} N^{\theta}(\lambda t) \stackrel{d}{=} 
N^{1}(Y_\theta(\lambda t)) \stackrel{d}{=}
N^{1}(\lambda^\theta Y_{\theta}(t)) \stackrel{d}{=}
N^{1, \lambda^\theta}(Y_\theta(t)). 
\ee
The last identity above is also for notational convenience, as it pushes the rate of the Poisson process from a time scale in the argument into the second index above.  

\subsection{The discrete multi-class queueing model}
\label{subsec:model}
{We} introduce a {\it multiclass queue} for which arrivals and departures are governed by two independent fractional Poisson processes. 

As mentioned in \cite{ButtGeorgiouScalas2022}, while there are several equivalent descriptions of queueing systems when arrivals and departures are Markovian, the corresponding fractional queues exhibit different behaviors depending on the description. In this article, we are only considering {a single} particular model for arrivals and departures. Our model has the same arrival and departure structure as Model 3 {in} \cite{ButtGeorgiouScalas2022}, but we {endow} it with a finite set of priority classes. {Later, in Section 5,} we extend these ideas in a simple example with a continuum set of classes.

Let us describe the model. Customers arrive and enter the queue at the jump times of a fractional Poisson process $N^{\alpha, \lambda}(t) \sim \text{FPP}(\alpha, \lambda)$, where $\alpha \in (0,1]$. At time $t = 0$ the queue is empty and $N^{\alpha, \lambda}(0)=0$. Upon arrival, each customer, independently of everything else, is placed into one of $K$ priority classes $\{ C_1,  C_2, \ldots, C_K\}$. The customer is placed into Class $i$ with probability $p_i>0$, where $p_1, \ldots , p_K$ are fixed numbers such that $p_1 + \ldots +p_K=1$. This customer will get service priority over all customers of Classes $i+1, \ldots, K$, and will stand in line in front of all of them. However, the customer will never appear in front of anyone from Classes $1, \ldots, i$ who joined the queue earlier. Moreover, everyone joining Classes $1, \ldots, i-1$ before departure of the customer will stand in line in front of this customer. Thus, the classes follow the order of priority.  

The departures from this queue are as follows. Consider a fractional Poisson process  $D^{\beta, \mu}(t) \sim$ FPP$(\beta, \mu)$, where $\beta \in (0,1]$, independent of $N^{\alpha, \lambda}$. At each jump time of $D^{\beta, \mu}$, the earliest customer from the highest non-empty priority class exits the system unless the queue is empty, in which case nothing happens. 

It is important to clarify that not all jump times of $D^{\beta, \mu}$ are `service times'. On one hand, the time runs even if the queue is empty. On the other hand, if the queue is not empty, it could be that during an inter-event time of $D^{\beta, \mu}$, the process $N^{\alpha, \lambda}$ has a few jumps, and some of the new customers arriving can acquire higher priority than all of the customers who were previously in the queue. So at the next jump time of $D^{\beta, \mu}$, one of these new customers will exit instead of someone waiting in the system when the `service period' has started.  

Note that we can assume that only a single class of customers exists, i.e.\ $K = 1$. In fact, all of our results hold true in this simple case, and they either reduce to or generalise all the results for Model 3 in~\cite{ButtGeorgiouScalas2022}. 

Consider the total queue length $Q(t)$ at time $t$, without specifying the customer classes. It is given by 
\be \label{eq:fullqueuelength}
Q(t) = N^{\alpha, \lambda}(t) - D^{\beta, \mu}(t) - \inf_{s \le t}\{ N^{\alpha, \lambda}(s) - D^{\beta, \mu}(s)\}.
\ee
Equation \eqref{eq:fullqueuelength} is an instance of the Skorokhod reflection map $\Phi$ from the space $D[0, \infty)$ of c\`adl\`ag functions on $[0, \infty)$ into itself (see p.\ 87 in~\cite{Whitt2002}). 
It is defined as follows: for any such function $f$, put
\[
\Phi(f)(t) = f(t) +\sup_{0\le s\le t}\max\{-f(s),0\} = f(t) -\inf_{0\le s\le t}\min\{f(s),0\}, \qquad t \ge 0.
\]
Applying $\Phi$ on functions $f$ where $f(0)=0$, we see that the minimum does not affect the expression above, so in this case $\Phi$ can be simplified to 
\be \label{eq:Phi}
\Phi(f)(t) = f(t) -\inf_{0\le s\le t} f(s), \quad f(0)=0.
\ee
Therefore, since both arrival and departure processes start at 0, we see that sample path by sample path,
\be \label{eq:QasPhi}
Q = \Phi( N^{\alpha, \lambda} - D^{\beta, \mu}).
\ee

Each class $C_i$ has its own queue length at time $t$, which we denote by $Q_i(t)$. The total queue length  $Q(t)$ satisfies
\be \label{eq:Qparts}
Q(t) = \sum_{i=1}^K Q_i(t).
\ee
Moreover, we define the aggregated queue length of the first $i$ classes by 
\be\label{eq:Partiallength}
Q_{\le i}(t) = \sum_{\ell=1}^i Q_\ell(t)
\ee
Denote by $N^{(i)}(t)$ the arrival process of Class $i$, counting the number of arrivals into this class by time $t$. In other words, $N^{(i)}(t)$ increases by $1$ at time $\tau$ if and only if $N^{\alpha, \lambda}(t)$ increases by $1$ at time $\tau$ and the customer arriving is assigned to be in Class $i$. We denote the vector of the arrival processes by 
\be\label{eq:bfNdef}
{\bf N}(t) = \big(N^{(1)}(t), \ldots, N^{(K)}(t) \big).
\ee
Then we can write the full arrival process as the sum
\be\label{eq:arrivalsum}
N^{\alpha, \lambda}(t) = \sum_{\ell=1}^K N^{(\ell)}(t).
\ee
Similarly to \eqref{eq:Partiallength}, the partial sum of the first $i$ counting processes is denoted by 
\be\label{eq:Nipartials}
N^{\alpha, \lambda}_{\le i}(t) = \sum_{\ell=1}^i N^{(\ell)}(t).
\ee
Finally, define the cumulative probability mass function $P_0 =0$ and
\be \label{eq:Pi}
P_i =\sum_{j=1}^i p_j.
\ee

\section{Results}
\label{sec:results}

We are now ready to present the 
results for these models. They come in two types: (i) distribution and scaling limits for thinned FPP processes and (ii) scaling limits and recurrence for the multiclass queue lengths. The application to the non-discrete multiclass model is in its own Section \ref{sec:doubleauction}.  

Throughout Section 2 we assume that $\alpha, \beta \in (0,1]$ and $\lambda, \mu>0$. 
Taking $\alpha=1$ (resp.\ $\beta=1$) recovers the classical Poisson arrivals (resp.\ services) as a special case.

\subsection{The thinned FPP} 
The first thing to show is a characterisation for the vector of arrival processes. It shows that the classification of customers still maintains an aspect of independence inherited from the classical Poisson thinning, and all correlations enter via the time change with the \emph{same} random clock.

\begin{theorem}\label{thm:babyarrival}
Let ${\bf N}(t)$ be as in \eqref{eq:bfNdef}. Then we have a distributional equality of processes 
\[
\{{\bf N}(t)\}_{t \ge 0}  \stackrel{d}{=} \big\{\big(N_{1}(p_{1} Y_\alpha(t)), \ldots, N_{K}(p_{K} Y_{\alpha}(t)) \big) \big\}_{t \ge 0},
\]
where $N_{1}(t), \ldots, N_{K}(t)$ are Poisson processes of rate $\lambda^{\alpha}$, $Y_\alpha(t)$ is the inverse of a standard $\alpha$-stable subordinator, and these $K+1$ processes are mutually independent.
\end{theorem}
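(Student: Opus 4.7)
The plan is to realize the thinning at the level of the standard Poisson process that drives the FPP via the time-change representation, apply the classical Poisson marking theorem there, and then absorb the intensities into the time argument. The key enabling fact from the excerpt is \eqref{eq:EquivReps}, which says $N^{\alpha,\lambda}(t) \stackrel{d}{=} N^{1,\lambda^\alpha}(Y_\alpha(t))$ as processes, with $Y_\alpha$ independent of $N^{1,\lambda^\alpha}$. The crucial observation is that the class labels are assigned to arrivals in order of occurrence, so they depend only on the rank of each jump, not its time; hence the marking commutes with any monotone, independent, continuous time change applied to the underlying Poisson process.

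Concretely, I would build everything on one probability space. Let $P$ be a rate-$\lambda^\alpha$ standard Poisson process with jump times $T_1 < T_2 < \cdots$, let $Y_\alpha$ be an independent inverse $\alpha$-stable subordinator, and let $(\xi_k)_{k \ge 1}$ be an i.i.d.\ sequence with $\P(\xi_k = i) = p_i$, independent of $(P, Y_\alpha)$. Attaching the label $\xi_k$ to the $k$-th jump and reading the resulting counts at time $Y_\alpha(t)$ produces a process that is equal in distribution, jointly in $t$, to ${\bf N}(t)$, because the labelling of $N^{\alpha,\lambda}$ and of $N^{1,\lambda^\alpha} \circ Y_\alpha$ proceeds in the same rank-ordered way and \eqref{eq:EquivReps} is a process-level identity. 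Now define $\widetilde{N}_i(s) := \#\{k : T_k \le s,\, \xi_k = i\}$. By the classical marking theorem for the rate-$\lambda^\alpha$ Poisson process $P$, the processes $\widetilde{N}_1, \ldots, \widetilde{N}_K$ are mutually independent Poisson processes with rates $p_i \lambda^\alpha$, and this independence is joint and preserved under composition with the independent $Y_\alpha$. This gives
\[
\{{\bf N}(t)\}_{t \ge 0} \stackrel{d}{=} \bigl\{\bigl(\widetilde{N}_1(Y_\alpha(t)), \ldots, \widetilde{N}_K(Y_\alpha(t))\bigr)\bigr\}_{t \ge 0}.
\]

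To reach the stated form, I would rewrite each rate-$p_i \lambda^\alpha$ process as a rate-$\lambda^\alpha$ process evaluated at a rescaled argument, as in \eqref{eq:lamove}: for independent rate-$\lambda^\alpha$ Poisson processes $N_1, \ldots, N_K$ (which can be chosen independent because the $\widetilde{N}_i$ are), there is the joint process-level identity $\{(\widetilde{N}_1(s), \ldots, \widetilde{N}_K(s))\}_{s \ge 0} \stackrel{d}{=} \{(N_1(p_1 s), \ldots, N_K(p_K s))\}_{s \ge 0}$. Composing with $Y_\alpha$, which is continuous and independent of both sides, preserves this distributional equality as a process in $t$ and yields exactly the claim.

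The only real obstacle is bookkeeping: verifying that each distributional equality is at the level of the $K$-dimensional process in $t$ rather than just a fixed-time marginal, and that the independence of $(\widetilde{N}_i)$ from $Y_\alpha$, which is what allows the time change to be inserted into each coordinate simultaneously, is genuinely preserved throughout. This is handled transparently by doing all constructions on a single probability space and by the fact that the marking operation depends only on the ordered jumps and the independent labels $(\xi_k)$.
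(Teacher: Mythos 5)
Your proposal is correct and takes essentially the paper's route: both represent the labelled process as $\S(N^{\alpha,\lambda}(\cdot))$ with $\S$ the multinomial label walk independent of the arrival FPP, lift the process-level identity $N^{\alpha,\lambda}\stackrel{d}{=}N^{1,\lambda^\alpha}\circ Y_\alpha$ through this representation, and reduce to classical Poisson thinning before recasting the rate-$p_i\lambda^\alpha$ coordinates as rate-$\lambda^\alpha$ Poisson processes at rescaled times. The only difference is that you cite the Poisson marking theorem directly, whereas the paper (as it notes in a footnote) chooses to re-derive that thinning identity by hand via stationarity/independence of increments and the conditional uniformity of Poisson jump times.
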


\begin{remark} 
Theorem \ref{thm:babyarrival} says that if we mark the events of an FPP independently into $K$ types, the vector of the marked counting is distributionally equivalent to a vector of independent Poisson coordinates time-changed by the same random clock $Y_\alpha$. 

As it turns out, the idea of time changing a vector of independent  Poisson processes with a common random clock exists in the literature. This is exactly the \emph{multivariate space–time fractional Poisson construction} of \cite{BeghinMacci2016}. The novelty in our paper is that the multivariate space-time FPP naturally arises as a result of the marking procedure, rather than being defined directly as the model under study.  
\end{remark}

\begin{remark}
For $\alpha=1$, our randomized class assignment at arrivals corresponds to the well-known procedure of thinning of a standard Poisson process $N^1$, which results in representing $N^1$ as a sum \eqref{eq:arrivalsum} of independent Poisson processes. Naturally, for $\alpha \in (0,1)$ these processes are no longer dependent. Corollary \ref{cor:babyarrival} below highlights how their correlations 
enter the picture.  
\end{remark}

\begin{corollary}\label{cor:babyarrival}
Let ${\bf N}(t)$ be as in \eqref{eq:bfNdef}. The Laplace transform  of ${\bf N}(t)$ is
\be \label{eq:NImgf}
\E e^{s \cdot {\bf N}(t)} = E_{\alpha}\Big( \lambda^\alpha t^{\alpha} \sum_{i=1}^K p_{i}(e^{s_{i}} - 1)\Big), 
\ee
where $s= (s_1, \ldots, s_K) \in \R^K$ and $t \ge 0$. The class arrival processes $N^{(i)}$ are distributed as 
\be \label{eq:1dimFPP0}
\{N^{(i)}(t)\}_{t \ge 0} \stackrel{d}{=} \{ N^{\alpha, \lambda p_i^{1/\alpha}}(t)\}_{t \ge 0} \sim {\rm FPP}(\alpha, \lambda p_i^{1/\alpha}).
\ee
Moreover, for any $\ell \le K$,
their sum \eqref{eq:Nipartials} up to $\ell$ satisfies 
\be\label{eq:ThinSum0} 
\Big\{N_{\le \ell}^{\alpha, \lambda}(t)\Big\}_{t\ge 0} \sim {\rm FPP}\big(\alpha, \lambda P_\ell^{1/\alpha}\big).
\ee
Finally, for any $t \ge 0$, the covariance between two coordinates $i \neq j$ is given by 
\begin{align} \label{eq:ThinCov0}
{\rm Cov}(N^{(i)}(t),N^{(j)}(t)) &= p_ip_j\lambda^{2\alpha} t^{2\alpha}\left( \frac{1}{\Gamma(1+2\alpha)} - \frac{1}{(\Gamma(1+\alpha))^2}\right)\\
&= p_ip_j\lambda^{2\alpha}{\rm Var}(Y_{\alpha}(t)).
\end{align}
\end{corollary}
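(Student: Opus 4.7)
The plan is to derive all four claims from the time-changed representation of Theorem \ref{thm:babyarrival}, exploiting that conditional on the random clock $Y_\alpha(t) = y$, the coordinates $N_i(p_i y)$ are independent Poisson random variables with parameters $\lambda^\alpha p_i y$.

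First I compute the joint Laplace transform. By the Poisson MGF and the conditional independence,
\[
\E\bigl[e^{s \cdot {\bf N}(t)}\mid Y_\alpha(t)=y\bigr] = \prod_{i=1}^K \exp\bigl(\lambda^\alpha p_i y(e^{s_i}-1)\bigr) = \exp\Bigl(\lambda^\alpha y \sum_{i=1}^K p_i(e^{s_i}-1)\Bigr).
\]
Taking expectation over $Y_\alpha(t)$ and applying \eqref{eq:subMGF} with $u = \lambda^\alpha \sum_i p_i(e^{s_i}-1)$ gives \eqref{eq:NImgf} immediately.

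For \eqref{eq:1dimFPP0} and \eqref{eq:ThinSum0} I argue at the process level. Using Theorem \ref{thm:babyarrival} together with the deterministic time-scaling \eqref{eq:lamove} applied to the rate-$\lambda^\alpha$ Poisson process $N_i$,
\[
\{N^{(i)}(t)\}_{t\ge 0} \stackrel{d}{=} \{N_i(p_iY_\alpha(t))\}_{t\ge 0} \stackrel{d}{=} \{N^{1,\lambda^\alpha p_i}(Y_\alpha(t))\}_{t\ge 0},
\]
and then the last identity of \eqref{eq:EquivReps}, read in reverse with $\theta=\alpha$ and underlying rate $\lambda p_i^{1/\alpha}$ (so that $(\lambda p_i^{1/\alpha})^\alpha = \lambda^\alpha p_i$), identifies the right-hand side as an ${\rm FPP}(\alpha,\lambda p_i^{1/\alpha})$. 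For \eqref{eq:ThinSum0} I observe that the process $s \mapsto \sum_{j=1}^\ell N_j(p_j s)$ is a sum of independent Poisson processes of rates $\lambda^\alpha p_j$ and is therefore itself a Poisson process of rate $\lambda^\alpha P_\ell = (\lambda P_\ell^{1/\alpha})^\alpha$, independent of $Y_\alpha$. Composing with $Y_\alpha$ and invoking \eqref{eq:EquivReps} once more delivers the claim.

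Finally, for the covariance \eqref{eq:ThinCov0}, the tower property together with the conditional independence gives
\[
\E[N^{(i)}(t) N^{(j)}(t)] = \lambda^{2\alpha} p_i p_j \E[Y_\alpha(t)^2], \qquad \E[N^{(i)}(t)] = \lambda^\alpha p_i \E[Y_\alpha(t)],
\]
so that ${\rm Cov}(N^{(i)}(t),N^{(j)}(t)) = \lambda^{2\alpha} p_i p_j \Var(Y_\alpha(t))$, which is the second equality in \eqref{eq:ThinCov0}. To obtain the first, I read off the first two moments of $Y_\alpha(t)$ from the Maclaurin expansion of \eqref{eq:subMGF}, namely $E_\alpha(ut^\alpha) = \sum_{k\ge 0}(ut^\alpha)^k/\Gamma(1+\alpha k)$, and substitute into $\Var(Y_\alpha(t))=\E[Y_\alpha(t)^2]-(\E Y_\alpha(t))^2$. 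No step is particularly deep; the main care required is to lift process-level identities from the joint representation in Theorem \ref{thm:babyarrival} rather than from one-dimensional MGFs alone.
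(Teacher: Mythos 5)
Your proof is correct and, overall, a close relative of the paper's. The one real difference is the choice of conditioning. For the Laplace transform and the covariance, you condition on the clock $Y_\alpha(t)$ and exploit the conditional independence of the Poisson coordinates from Theorem~\ref{thm:babyarrival}, then apply~\eqref{eq:subMGF}. This is exactly the ``direct'' route the paper flags in its opening sentence but then declines to take; the paper instead conditions on the \emph{count} $N^{\alpha,\lambda}(t)$ and works with the conditional multinomial law~\eqref{eq:multi:pmf1}, using the conditional covariance formula in two layers (first over $N^{\alpha,\lambda}(t)$, then over $Y_\alpha$). Your route is a little shorter for the covariance since the conditional cross-term factorises immediately: $\E[N^{(i)}(t)N^{(j)}(t)\mid Y_\alpha(t)]=\lambda^{2\alpha}p_ip_j\,Y_\alpha(t)^2$, giving $\lambda^{2\alpha}p_ip_j\,\Var Y_\alpha(t)$ in one step. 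For~\eqref{eq:1dimFPP0} and~\eqref{eq:ThinSum0} you argue essentially as the paper does, using projection, superposition of independent Poisson processes, independence from $Y_\alpha$, and the identities~\eqref{eq:lamove}/\eqref{eq:EquivReps}.

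One caution on the final numerical identification, which you leave implicit. Reading the moments off $E_\alpha(ut^\alpha)=\sum_{k\ge 0}(ut^\alpha)^k/\Gamma(1+\alpha k)$ gives $\E Y_\alpha(t)=t^\alpha/\Gamma(1+\alpha)$ and $\E\,Y_\alpha(t)^2=2t^{2\alpha}/\Gamma(1+2\alpha)$ (the $k=2$ Taylor coefficient times $2!$), hence
\[
\Var Y_\alpha(t)=t^{2\alpha}\left(\frac{2}{\Gamma(1+2\alpha)}-\frac{1}{(\Gamma(1+\alpha))^2}\right).
\]
This is the standard variance of the inverse $\alpha$-stable subordinator (note it vanishes at $\alpha=1$, as it must). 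The displayed constant in~\eqref{eq:ThinCov0} omits the factor $2$; the paper's own derivation uses $\Var Y=(E_\alpha)''/2-[(E_\alpha)']^2$ at $u=0$, but the second moment is $(E_\alpha)''|_{u=0}$ itself, not half of it. So if you carry out the Maclaurin computation carefully you will reach a constant that does not match the statement — that is a slip in the statement, not in your method — and you should state the corrected constant rather than silently matching~\eqref{eq:ThinCov0}.

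Everything else is fine: you correctly lift all the claimed identities to process level via Theorem~\ref{thm:babyarrival}, and the use of~\eqref{eq:EquivReps} to recognise the FPP marginals is exactly what the paper does.
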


\begin{remark}
Given a fixed total number of customers $N^{\alpha, \lambda}(t)$, the class counts “compete’’, in the sense that their conditional covariance is negative. In fact, it follows from~\eqref{eq:multi:pmf1} below that
\begin{align*}
{\rm Cov}(N^{(i)}(t), N^{(j)}(t))| N^{\alpha, \lambda}(t)) 
&= \E(N^{(i)}(t)N^{(j)}(t)|N^{\alpha, \lambda}(t)) \\
&\phantom{xxxxxxxxxxxx}- \E(N^{(i)}(t)|N^{\alpha, \lambda}(t))\E(N^{(j)}(t)|N^{\alpha, \lambda}(t))\\
&=-p_ip_j N^{\alpha, \lambda}(t)\le 0.
\end{align*}
However, it is readily seen from \eqref{eq:ThinCov0} that the total covariance between the coordinates is positive. This is an artifact of the total itself being random through a common inverse–stable clock. Busy periods speed up \emph{all} class streams simultaneously and quiet periods slow them all down.
This shared random environment creates \emph{positive} co–movement across classes, vanishing only in the Poisson case ($\alpha=1$) where the clock is deterministic.
Practically, positive cross–covariance means pooling is less variance–reducing than in classical models and extremes tend to hit all classes together. This observation is crucial for staffing and tail risk.
\end{remark}

Next, we are interested in distributional properties of ${\bf N}(t), Q(t)$, and $(Q_1(t), \ldots, Q_K(t))$. The first two theorems are about process-level convergence for ${\bf N}(t)$ in the form of an LLN and a FCLT. 

\begin{theorem}\label{thm:LLN}
Let ${\bf N}(t)$ be as in \eqref{eq:bfNdef}, and let ${\bf p} = (p_1, \ldots, p_K)$. We have the weak convergence 
\[
\big\{u^{-\alpha} {\bf N}(ut) \big\}_{t \ge 0} \stackrel{d}{\longrightarrow} \{ \lambda^{\alpha} {\bf p} Y_\alpha(t)\}_{t \ge 0}, \qquad u \to \infty,
\]
in the space $D([0,\infty),\R^K)$ equipped with the Skorokhod topology $J_1$, where $Y_\alpha$ is the inverse of a standard $\alpha$-stable subordinator. 
\end{theorem}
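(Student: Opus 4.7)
The plan is to combine the time-change representation from Theorem~\ref{thm:babyarrival} with the self-similarity~\eqref{eq:Yselfsimilarity} of the inverse stable subordinator and the classical functional strong law of large numbers for Poisson processes.

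First, I would replace $\{{\bf N}(t)\}_{t\ge 0}$ by its distributional representation $\{(N_i(p_i Y_\alpha(t)))_{i=1}^K\}_{t\ge 0}$, with $N_1,\ldots,N_K$ being independent Poisson$(\lambda^\alpha)$ processes that are jointly independent of $Y_\alpha$. Using the self-similarity~\eqref{eq:Yselfsimilarity} together with this independence (so that scaling by $u^\alpha$ can be pulled out of $Y_\alpha(u\cdot)$ without disturbing the coupling through the $N_i$), this yields
\[
\big\{u^{-\alpha} {\bf N}(ut)\big\}_{t \ge 0} \stackrel{d}{=} \big\{\big(u^{-\alpha} N_i(p_i u^\alpha Y_\alpha(t))\big)_{i=1}^K\big\}_{t \ge 0}.
\]

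Second, I would invoke the functional strong LLN for Poisson processes, which gives $v^{-1} N_i(vs) \to \lambda^\alpha s$ a.s.\ uniformly in $s$ on any compact interval as $v \to \infty$. By the independence of $N_1,\ldots,N_K$, the joint uniform convergence of $v^{-1}(N_i(v\cdot))_{i=1}^K$ to the constant-speed process $\lambda^\alpha(\iota,\ldots,\iota)$ also holds almost surely. Setting $v = u^\alpha$ and composing coordinate-wise with the continuous non-decreasing clock $p_i Y_\alpha$, which is a.s.\ bounded on any $[0,T]$, the uniform convergence transfers to the composition, giving a.s.
\[
\sup_{t \in [0,T]}\Big| u^{-\alpha} N_i(p_i u^\alpha Y_\alpha(t)) - \lambda^\alpha p_i Y_\alpha(t)\Big| \longrightarrow 0, \qquad 1 \le i \le K.
\]
Since the limit $\lambda^\alpha {\bf p} Y_\alpha(\cdot)$ is continuous, a.s.\ uniform convergence on compacts implies $J_1$-convergence in $D([0,\infty),\R^K)$, and combined with the distributional identity above this yields the claimed weak convergence.

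The only mildly delicate step is transferring uniform convergence of $v^{-1} N_i(v\cdot)$ from deterministic compact intervals to the random composition with $p_i Y_\alpha$. There is no real obstacle here: $Y_\alpha$ is a.s.\ continuous and attains a finite maximum on $[0,T]$, so the composition on $[0,T]$ only samples the rescaled Poisson processes on the random compact interval $[0, p_i Y_\alpha(T)]$, on which the a.s.\ uniform convergence applies. Equivalently, one can apply the continuous mapping theorem for the composition map $(f,g)\mapsto f\circ g$ on $D\times C_\uparrow$; see, for instance, Theorem~13.2.1 in~\cite{Whitt2002}. Joint convergence in $\R^K$ is immediate from intersecting the $K$ almost-sure events arising from the independent Poisson processes.
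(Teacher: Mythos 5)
Your proof is correct, and it takes a genuinely different (and arguably cleaner) route than the paper's. The paper keeps the process $\{\tilde{\bf N}_u(t) = u^{-\alpha}(N_i(p_i Y_\alpha(ut)))_i\}$ as is, and shows that the uniform distance between $\tilde{\bf N}_u$ and $\lambda^\alpha u^{-\alpha}{\bf p}\,Y_\alpha(u\cdot)$ converges to zero \emph{in probability}. Because near $t=0$ the argument $Y_\alpha(ut)$ may be too small for the Poisson LLN ratio to have kicked in, the paper has to split the supremum at the point $t = 1/\sqrt u$ and control the head and tail separately via the ratio process $L(t) = (N_i(p_iY_\alpha(t))/Y_\alpha(t))_i$, which converges a.s.\ to $\lambda^\alpha {\bf p}$. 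Only at the very end does the paper use the distributional identity $u^{-\alpha}Y_\alpha(u\cdot) \stackrel{d}{=} Y_\alpha$ and a Slutsky-type theorem (Theorem~3.1 of \cite{billingsley1999}) to replace the scaled clock by $Y_\alpha$ itself. You instead invoke the self-similarity \eqref{eq:Yselfsimilarity} and the independence of $N_1,\dots,N_K$ from $Y_\alpha$ \emph{at the outset}, to get the distributional identity $u^{-\alpha}{\bf N}(u\cdot) \stackrel{d}{=} (u^{-\alpha} N_i(p_i u^\alpha Y_\alpha(\cdot)))_i$ with the unscaled $Y_\alpha$ appearing from the start. This makes the limit process appear directly, and the remaining statement is an \emph{almost sure} functional SLLN: $\sup_{0\le s\le M}|v^{-1}N_i(vs) - \lambda^\alpha s| \to 0$ as $v\to\infty$, applied with $v = u^\alpha$ and the random bound $M = p_iY_\alpha(T)$ (which is legitimate since the a.s.\ convergence holds for all $M$ simultaneously). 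This avoids both the $1/\sqrt u$ split and the final Slutsky step, trading them for a single application of the uniform Poisson SLLN on a random compact interval; the two approaches buy the same result, but yours is shorter and makes the role of the common random clock more transparent. Note that, as in the paper, one should formally reduce to $D([0,T],\R^K)$ for each fixed $T$ (e.g.\ via \cite[Theorem~16.7]{billingsley1999}) before passing to $D([0,\infty),\R^K)$, since the limit is continuous; this is implicit in your argument but worth stating.
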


We close this subsection with a functional central limit theorem for the process ${\bf N}(t)$. We use its time-changed representation $(N_1(p_1 Y_{\alpha}(t)), \ldots, N_K(p_K Y_{\alpha}(t)))$, allowed by Theorem \ref{thm:babyarrival}, and we work directly with this process. For each customer class $i=1,\dots ,K$, define the centred and normalised process
\[
   M_u^{(i)}(t)
   \;=\;
   \frac{N_i(p_i Y_{\alpha}(u t))-p_i \lambda^{\alpha} Y_\alpha(u t)}{u^{\alpha/2}},
   \qquad t \ge 0,\;u>0 .
\]

\begin{theorem}[Functional CLT for thinned FPP]\label{thm:fclt}
Let $B_1,\dots ,B_K$ be independent standard Brownian motions, independent of $Y_\alpha$. Then we have the weak convergence
\[
   (M_u^{(1)},\dots ,M_u^{(K)}) \stackrel{d}{\longrightarrow} \big \{\bigl(B_1(p_1 \lambda^{\alpha}\,Y_\alpha(t)),\dots ,
         B_K(p_K\lambda^{\alpha}\,Y_\alpha(t))\bigr) \big \}_{t \ge 0},
\]
in the space $D\!\bigl([0,\infty),\mathbb R^{K}\bigr)$ equipped with the Skorokhod topology $J_1$, as $u \to \infty$.
\end{theorem}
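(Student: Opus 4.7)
The plan is a random time-change argument built on the classical FCLT for Poisson processes combined with the continuous mapping theorem in the $J_1$ topology. First I set $v = u^\alpha$. By the self-similarity \eqref{eq:Yselfsimilarity} of $Y_\alpha$ and its independence from $N_1, \dots, N_K$, the process vector $(M_u^{(1)}, \ldots, M_u^{(K)})$ is equal in distribution, jointly in $i$, to
\[
\Big( \xi_v^{(1)}(p_1 Y_\alpha(\cdot)), \ldots, \xi_v^{(K)}(p_K Y_\alpha(\cdot))\Big),
\qquad
\xi_v^{(i)}(s) := \frac{N_i(vs) - \lambda^\alpha v s}{\sqrt{v}},
\]
with the \emph{same} process $Y_\alpha$ driving every coordinate and independent of all the $\xi_v^{(i)}$.

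The classical FCLT for a rate $\lambda^\alpha$ Poisson process yields $\xi_v^{(i)} \Rightarrow \lambda^{\alpha/2} B_i$ in $(D([0,\infty)), J_1)$ as $v \to \infty$, and mutual independence of $N_1, \ldots, N_K$ upgrades this to joint convergence $(\xi_v^{(1)}, \ldots, \xi_v^{(K)}) \Rightarrow (\lambda^{\alpha/2} B_1, \ldots, \lambda^{\alpha/2} B_K)$ in $(D([0,\infty), \R^K), J_1)$. A routine factorisation argument, which uses that $Y_\alpha$ does not depend on $v$ and is independent of all the Poisson processes, lifts this further to
\[
\big(\xi_v^{(1)}, \ldots, \xi_v^{(K)}, Y_\alpha\big) \Rightarrow \big(\lambda^{\alpha/2} B_1, \ldots, \lambda^{\alpha/2} B_K, Y_\alpha\big)
\]
in $(D([0,\infty), \R^{K+1}), J_1)$, with $(B_i)_{i=1}^K$ independent of $Y_\alpha$ in the limit.

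Finally I apply the continuous mapping theorem to the composition map
\[
\Psi \colon (f_1, \ldots, f_K, g) \longmapsto \big( f_1(p_1 g(\cdot)), \ldots, f_K(p_K g(\cdot)) \big).
\]
By Whitt \cite{Whitt2002}, Theorem 13.2.2, composition with a nondecreasing time change is $J_1$-continuous at points where the time change has continuous paths. Since $Y_\alpha$ has continuous nondecreasing trajectories a.s., the limit lies in a set of full continuity for $\Psi$, and the CMT delivers
\[
\big(\xi_v^{(i)}(p_i Y_\alpha(\cdot))\big)_{i=1}^K \Rightarrow \big(\lambda^{\alpha/2} B_i(p_i Y_\alpha(\cdot))\big)_{i=1}^K.
\]
The Brownian scaling identity $\lambda^{\alpha/2} B_i(p_i Y_\alpha(t)) \stackrel{d}{=} B_i(p_i \lambda^\alpha Y_\alpha(t))$, valid jointly across $i$ conditional on $Y_\alpha$, identifies the limit with the one in the theorem statement.

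The main obstacle is verifying the $J_1$-continuity of $\Psi$ with a single shared time change across the $K$ coordinates, while simultaneously preserving the correct independence structure ($B_i$'s mutually independent and jointly independent of $Y_\alpha$) in the limit. Once the joint convergence and the composition CMT are both in place, the remaining Brownian scaling step is routine.
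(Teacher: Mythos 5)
Your proposal is correct and follows essentially the same route as the paper: use the self-similarity of $Y_\alpha$ to factor out the scaling, invoke the classical FCLT plus independence to get joint convergence of the normalised Poisson processes together with $Y_\alpha$, then apply the continuous mapping theorem via Whitt's Theorem 13.2.2 on composition, finishing with Brownian scaling. The only cosmetic difference is that the paper absorbs the factor $p_i$ into the definition of the pre-composition processes (its $L_u^{(i)}(s)$ has $p_i u^\alpha s$ inside), so the map applied is plain coordinatewise composition with $g$ rather than your $\Psi$ with $p_i g$ in the $i$-th slot; both are handled identically by the composition continuity result.
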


\begin{remark}This theorem applies directly to the multivariate space-time fractional Poisson model of \cite{BeghinMacci2016}. 
\end{remark}

\subsection{The discrete-class multiclass queue: Scaling limits and recurrence}

\begin{theorem}[Scalings of queue lengths]\label{thm:QConvergence}
Let $Y_{\alpha}(t)$ and $\widetilde Y_{\alpha}(t)$ be two independent inverse standard $\alpha$-stable subordinators, and let $\gamma = \max\{\alpha, \beta\}$.
Then for any $i \le K$, for $Q_{\le i}$ given by \eqref{eq:Partiallength}, $P_i$ given by~\eqref{eq:Pi}, and $\Phi$ given by~\eqref{eq:Phi}, we have the weak convergence
\begin{align}
\left\{ \frac{Q_{\le i}(u t)}{u^\gamma} \right \}_{t \ge 0}&\stackrel{d}{\longrightarrow} \displaystyle 
\begin{cases} 
\displaystyle \lambda^\alpha P_i Y_{\alpha}, & \quad \gamma = \alpha >\beta,\\
0, &\quad \gamma = \beta > \alpha, \\ 
\displaystyle  \Phi\big(\lambda^\alpha P_i Y_\alpha - \mu^\alpha \widetilde Y_\alpha \big), &\quad \gamma = \alpha = \beta,
\end{cases}
\label{eq:Qilimit}
\end{align}
in the space $D[0, \infty)$ equipped with the Skorokhod topology $J_1$, as $u \to \infty$ (above, $0$ is as a function). 
Moreover, in the case where $\alpha = \beta$, the individual class queue lengths satisfy 
\begin{align}
\left\{ \frac{Q_i(u t)}{u^\gamma} \right \}_{t \ge 0} &\stackrel{d}{\longrightarrow}
\Phi\big(\lambda^\alpha P_i Y_\alpha - \mu^\alpha \widetilde Y_\alpha\big)-\Phi\big(\lambda^\alpha P_{i-1} Y_\alpha - \mu^\alpha \widetilde Y_\alpha\big).
\label{eq:individualq}
\end{align}
\end{theorem}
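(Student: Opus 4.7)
The crux is the pathwise identity
\[
Q_{\le i} \;=\; \Phi\bigl(N_{\le i}^{\alpha,\lambda} - D^{\beta,\mu}\bigr),
\]
which extends \eqref{eq:QasPhi} from $i = K$ to general $i$. Indeed, by the priority rule, a jump of $D^{\beta,\mu}$ at time $t$ decreases $Q_{\le i}$ by one precisely when $Q_{\le i}(t^-) > 0$ (in which case the departing customer belongs to one of the top $i$ classes) and otherwise leaves $Q_{\le i}$ unchanged; this is exactly the effect of the scalar Skorokhod reflection on the unrestricted difference of input counts starting at $0$.

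Given this reduction, I would extract the joint scaling limits of the inputs. Theorem~\ref{thm:LLN} gives $u^{-\alpha}{\bf N}(u\cdot) \stackrel{d}{\longrightarrow} \lambda^\alpha {\bf p}\, Y_\alpha$ in $D([0,\infty),\R^K)$ with $J_1$, hence $u^{-\alpha} N_{\le i}^{\alpha,\lambda}(u\cdot) \stackrel{d}{\longrightarrow} \lambda^\alpha P_i Y_\alpha$ jointly in $i$. The same theorem at $K=1$ applied to the independent FPP$(\beta,\mu)$ process $D^{\beta,\mu}$ yields $u^{-\beta} D^{\beta,\mu}(u\cdot) \stackrel{d}{\longrightarrow} \mu^\beta \widetilde Y_\beta$ for an independent inverse $\beta$-stable subordinator $\widetilde Y_\beta$, and independence promotes these to joint convergence. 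Writing
\[
\frac{N_{\le i}^{\alpha,\lambda}(u\cdot) - D^{\beta,\mu}(u\cdot)}{u^\gamma} \;=\; u^{\alpha-\gamma}\cdot\frac{N_{\le i}^{\alpha,\lambda}(u\cdot)}{u^\alpha} \;-\; u^{\beta-\gamma}\cdot\frac{D^{\beta,\mu}(u\cdot)}{u^\beta},
\]
the factor $u^{-|\alpha-\beta|}$ annihilates the term whose index is smaller than $\gamma$, so the left-hand side converges to $\lambda^\alpha P_i Y_\alpha$ when $\gamma = \alpha > \beta$, to $-\mu^\beta \widetilde Y_\beta$ when $\gamma = \beta > \alpha$, and to $\lambda^\alpha P_i Y_\alpha - \mu^\alpha \widetilde Y_\alpha$ when $\gamma = \alpha = \beta$.

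The scalar Skorokhod reflection map $\Phi$ is Lipschitz in the uniform norm and continuous in $J_1$ on $D[0,\infty)$ (see p.~87 of~\cite{Whitt2002}), so the continuous mapping theorem together with the identity of the first paragraph gives $u^{-\gamma} Q_{\le i}(u\cdot) \stackrel{d}{\longrightarrow} \Phi$ applied to the three limits above. A direct computation collapses the first two cases: on the non-decreasing path $\lambda^\alpha P_i Y_\alpha$ one has $\inf_{s \le \cdot} = 0$ so $\Phi$ acts as the identity; on the non-increasing path $-\mu^\beta \widetilde Y_\beta$ one has $\inf_{s \le t}(-\mu^\beta \widetilde Y_\beta(s)) = -\mu^\beta \widetilde Y_\beta(t)$ so $\Phi$ vanishes identically. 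Only in the critical case $\alpha = \beta$ does the reflection survive, producing \eqref{eq:Qilimit}.

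The individual-class limit \eqref{eq:individualq} then follows from $Q_i = Q_{\le i} - Q_{\le i-1}$ (with $Q_{\le 0} \equiv 0$), since the previous step already yields joint convergence of the pair $(u^{-\alpha} Q_{\le i}(u\cdot), u^{-\alpha} Q_{\le i-1}(u\cdot))$: the common clock $Y_\alpha$ is inherited from the vector convergence in Theorem~\ref{thm:LLN}, and the same $\widetilde Y_\alpha$ enters on the departure side. Componentwise subtraction is continuous, giving the claimed difference of reflections. The genuine obstacle of the plan is the clean verification of the pathwise reduction $Q_{\le i} = \Phi(N_{\le i}^{\alpha,\lambda} - D^{\beta,\mu})$; once it is in place the rest is a routine continuous-mapping argument, with joint $J_1$ convergence passing cleanly through $\Phi$ and subtraction thanks to the almost-sure continuity of the two limiting subordinators.
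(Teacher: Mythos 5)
Your proposal is correct and follows essentially the same route as the paper: the pathwise identity $Q_{\le i}=\Phi\bigl(N^{\alpha,\lambda}_{\le i}-D^{\beta,\mu}\bigr)$, joint scaling limits of the inputs, and the continuous mapping theorem applied to $(f,g)\mapsto\Phi(f_1+\dots+f_i-g)-\Phi(f_1+\dots+f_{i-1}-g)$ for \eqref{eq:individualq}. The only difference is that the paper obtains \eqref{eq:Qilimit} by identifying $N^{\alpha,\lambda}_{\le i}$ as an FPP$(\alpha,\lambda P_i^{1/\alpha})$ via \eqref{eq:ThinSum0} and then citing Theorem~3.3 of \cite{ButtGeorgiouScalas2022}, whereas you rederive that result in a self-contained way from Theorem~\ref{thm:LLN}.
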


Having obtained a first-order scaling limit in Theorem \ref{thm:QConvergence} and the functional CLT in Theorem \ref{thm:fclt} for the class counts, the next natural step is to combine the two to obtain a functional limit theorem for the queue lengths $Q_{\le i}$, centered appropriately.   

Assume, as in Theorem \ref{thm:fclt}, that the class arrival process is given by $(N_1(p_1 Y_{\alpha}(t)), \ldots, N_K(p_K Y_{\alpha}(t)))$, and the service departure process is $\widetilde{N}(\widetilde{Y}_{\beta}(t))$, where $N_1, \ldots, N_K$ are Poisson processes of rate $\lambda^{\alpha}$  and $\widetilde{N}$ is a Poisson process of rate $\mu^{\beta}$, 
$Y_\alpha$ and $\widetilde Y_\beta$ are inverse standard stable subordinators of respective indices $\alpha$ and $\beta$, and all the $K+3$ processes are independent. For fixed $i, \lambda, \mu, p_1, \ldots, p_i$, which will not appear in the shorthanded notation below, define 
\be\label{eq:2diff}
N_{\alpha, \beta}^{\rm diff}(t) = \sum_{j=1}^i N_j( p_j Y_\alpha(t)) - \widetilde{N}(  \widetilde{Y}_{\beta}(t)), \quad Y_{\alpha, \beta}^{\rm diff}(t) =\lambda^\alpha P_i Y_\alpha(t) - \mu^\beta \widetilde Y_\beta(t).
\ee
Then we define $Q^{\rm cent}_{\le i}(t)$, the ``centered''  aggregated queue length of Classes 1 through $i$, to be 
\be\label{Not1}
Q^{\rm cent}_{\le i} = \Phi \big( N_{\alpha, \beta}^{\rm diff} - Y_{\alpha, \beta}^{\rm diff} \big),
\ee
i.e.~ the Skorokhod reflection map applied to the difference of the centered arrival and departure processes. Then we have the following Corollary of Theorem \ref{thm:fclt}.  

\begin{corollary}[A functional CLT for a centered queue] 
\label{cor:qclt}
Let $X_{\alpha}$ and $\widetilde X_{\beta}$ be  inverse standard stable subordinators of respective indices $\alpha$ and $\beta$, $B$ and $\widetilde B$ be standard Brownian motions, and all four processes be independent. 

Then for $Q^{\rm cent}_{\le i}$ in \eqref{Not1}, we have the weak convergence
\begin{align*}
\left\{ \frac{Q^{\rm cent}_{\le i}(ut)}{u^{\gamma/2}} \right\}_{t \ge 0}
%\left\{ \frac{Q_{\le i}(ut) - Y_{\alpha, \beta}^{\rm diff}(ut)+\inf_{s\le u}\{ Y_{\alpha, \beta}^{\rm diff}(st) \} + H_{\alpha, \beta}(ut)}{u^{\gamma/2}} \right\}_{t \ge 0} \\
\stackrel{d}{\longrightarrow}\begin{cases}
\Phi\left(\sqrt{\lambda^\alpha P_i} B \circ X_\alpha\right), &\quad \gamma = \alpha > \beta,\\ 
\Phi\big(\sqrt{\lambda^\alpha P_i} B \circ X_\alpha-\sqrt{\mu^\beta} {\widetilde{B}}\circ \widetilde X_\alpha\big), &\quad \gamma = \alpha = \beta, \\
  \Phi\big(\sqrt{\mu^\beta} \widetilde B \circ \widetilde X_\beta\big), &\quad \gamma = \beta > \alpha.
\end{cases}
\end{align*}
in the space $D\!\bigl([0,\infty),\mathbb R^{K}\bigr)$ equipped with the Skorokhod topology $J_1$, as $u \to \infty$.
\end{corollary}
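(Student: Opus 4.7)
The plan is to write $Q^{\rm cent}_{\le i}(ut)/u^{\gamma/2}$ as the Skorokhod reflection $\Phi$ applied to a rescaled and centered input, identify the weak limit of that input via Theorem~\ref{thm:fclt}, and conclude by the continuous mapping theorem. The map $\Phi$ from~\eqref{eq:Phi} is positively homogeneous and commutes with time dilation, i.e.\ $\Phi(cf)(t)=c\Phi(f)(t)$ for $c>0$ and $\Phi(f(u\,\cdot))(t)=\Phi(f)(ut)$. Combined with~\eqref{Not1}, these give
\begin{align*}
\frac{Q^{\rm cent}_{\le i}(ut)}{u^{\gamma/2}}
  = \Phi\!\left( u^{(\alpha-\gamma)/2}\,A_u \;-\; u^{(\beta-\gamma)/2}\,D_u \right)\!(t),
\end{align*}
where
\begin{align*}
A_u(t) &= u^{-\alpha/2}\sum_{j=1}^{i}\bigl[N_j(p_j Y_\alpha(ut))-\lambda^{\alpha}p_j Y_\alpha(ut)\bigr], \\
D_u(t) &= u^{-\beta/2}\bigl[\widetilde N(\widetilde Y_\beta(ut))-\mu^{\beta}\widetilde Y_\beta(ut)\bigr].
\end{align*}

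Next, I would apply Theorem~\ref{thm:fclt} twice. Summing the first $i$ coordinates of $(M_u^{(1)},\dots,M_u^{(K)})$ is continuous on $D([0,\infty),\mathbb{R}^K)$, hence $A_u \stackrel{d}{\longrightarrow} \sum_{j=1}^{i} B_j(p_j\lambda^{\alpha}Y_\alpha(\cdot))$. Conditional on $Y_\alpha$, the limit is a centered Gaussian process with independent increments and covariance $P_i\lambda^{\alpha}Y_\alpha(s\wedge t)$, so matching covariance structures shows it has the same law as $\sqrt{\lambda^{\alpha}P_i}\,B\circ Y_\alpha$ for a standard Brownian motion $B$ independent of $Y_\alpha$ (Brownian scaling absorbs the inner factor $\lambda^{\alpha}$ into the external $\sqrt{\lambda^{\alpha}}$). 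Applying the same theorem with $K=1$, $p_1=1$, and $(\alpha,\lambda)$ replaced by $(\beta,\mu)$ to the independent departure FPP yields $D_u \stackrel{d}{\longrightarrow} \sqrt{\mu^{\beta}}\,\widetilde B\circ \widetilde Y_\beta$. Independence of arrivals and departures lifts these marginal limits to joint convergence of $(A_u,D_u)$ in $D([0,\infty),\mathbb{R}^{2})$ under $J_1$, with independent limit components.

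Finally, I would read off the three cases from the exponents $(\alpha-\gamma)/2$ and $(\beta-\gamma)/2$: when $\gamma=\alpha>\beta$ only the arrival term survives; when $\gamma=\beta>\alpha$ only the departure term survives, and the sign flip is absorbed by $\widetilde B\stackrel{d}{=}-\widetilde B$; when $\gamma=\alpha=\beta$ both prefactors are $1$. In each case the rescaled input converges in $J_1$ to a process with almost surely continuous paths (a composition of continuous processes), $\Phi$ is continuous at such paths in $J_1$ (see~\cite[p.~87]{Whitt2002}), and the continuous mapping theorem delivers the announced limit (with $X_\alpha\stackrel{d}{=}Y_\alpha$ and $\widetilde X_\beta\stackrel{d}{=}\widetilde Y_\beta$).

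The main obstacle is the joint-convergence step: we need $(A_u,D_u)$ to converge jointly so that the difference inside $\Phi$ has the correct law, which I would obtain from independence of arrival and departure processes combined with the two marginal applications of Theorem~\ref{thm:fclt}. A secondary bookkeeping point is the process-level identification of $\sum_{j=1}^{i} B_j(p_j\lambda^{\alpha}Y_\alpha(\cdot))$ with $\sqrt{\lambda^{\alpha}P_i}\,B\circ Y_\alpha$, which requires matching Gaussian covariance functions as processes in $t$ and not merely pointwise.
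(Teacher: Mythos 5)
Your proposal is correct and follows essentially the same route as the paper: rewrite $Q^{\rm cent}_{\le i}(ut)/u^{\gamma/2}$ as $\Phi$ applied to a rescaled difference of centered arrival and departure processes, use Theorem~\ref{thm:fclt} together with independence to obtain joint $J_1$ convergence of that input to compositions of Brownian motions with inverse stable subordinators (dropping the sub-dominant term by the scaling exponents), and conclude by continuity of $\Phi$ at continuous limit paths. The only difference is that you spell out a few steps the paper leaves implicit (homogeneity and time-dilation of $\Phi$, the continuous projection/sum from the vector FCLT, and the Gaussian covariance matching that identifies $\sum_{j\le i}B_j(p_j\lambda^\alpha Y_\alpha(\cdot))$ with $\sqrt{\lambda^\alpha P_i}\,B\circ Y_\alpha$).
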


\begin{remark} A different centered queue length can be defined by
\[
\widetilde{Q}^{\rm cent}_{\le i} = \Phi \big( N_{\alpha, \beta}^{\rm diff}\big) - \Phi \big(Y_{\alpha, \beta}^{\rm diff} \big).
\]
This would be in the spirit of classical scaling limits such as CLT where to get a weak convergence we center the random process around its law of large numbers. Let 
\be \label{Not2}
H_{\alpha, \beta}(t)={Q}^{\rm cent}_{\le i}(t)-\widetilde{Q}^{\rm cent}_{\le i}(t). 
\ee
We believe that $H_{\alpha, \beta}(u\cdot)/u^{\gamma/2}$ has a non-trivial limit, and as such we should be able to obtain scaling limit for $\widetilde{Q}^{\rm cent}_{\le i}$.  
This study will be left as part of a future work.    
\end{remark}

Finally, our last theorem concerns `recurrence' in the sense of the queue emptying infinitely often with probability 1, and `transience' in the sense that with probability 1 there is a subsequence of random times $t_n \to \infty$ as $n \to \infty$, such that $Q(t_n)\to \infty$. We only state the result for the case $\alpha = \beta$.   

In \cite{ButtGeorgiouScalas2022}, the proof methodology utilised certain couplings to argue that one should expect the restless Mittag-Leffler queue (fractional Model 3) to empty infinitely often when  $\alpha = \beta$ and the dilation parameters $\lambda$ and $\mu$ satisfied the conditions $\mu \ge \lambda$. This condition for recurrence of the queue appears to be natural since it is required in the classical light-tailed M/M/1 setting. However, it turns out that in our heavy-tailed setting this condition is excessive, as shown in the following result.

\begin{theorem}[Recurrence and Transience]\label{thm:Rec+Trans}
In the critical case $\alpha=\beta$ in Theorem~\ref{thm:QConvergence}, the following statements hold true, irrespective of the values of $\lambda, \mu, p_i$.
\begin{enumerate}
\item The queue empties infinitely often with probability $1$.
\item We have
$
\varlimsup_{t\to \infty} Q_{\le i}(t) = +\infty 
$
with probability 1.
\end{enumerate}
\end{theorem}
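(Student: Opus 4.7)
The plan is to reduce both assertions to the unreflected net flow $X_i(t):=N^{\alpha,\lambda}_{\le i}(t)-D^{\beta,\mu}(t)$. Arrivals into classes $1$ through $i$ form the FPP $N^{\alpha,\lambda}_{\le i}$, and departures from $Q_{\le i}$ occur at ticks of $D^{\beta,\mu}$ exactly while $Q_{\le i}>0$, so the Skorokhod-reflection representation \eqref{eq:QasPhi} gives $Q_{\le i}=\Phi(X_i)$. From \eqref{eq:Phi} we read off $Q_{\le i}(t)\ge X_i(t)$ and $Q_{\le i}(t)=0$ precisely at running minima of $X_i$, which, because $X_i$ is integer-valued with unit jumps, coincide with departure events that have just emptied $Q_{\le i}$. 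Consequently it is enough to prove
\[
\varliminf_{t\to\infty} X_i(t)=-\infty \quad\text{and}\quad \varlimsup_{t\to\infty} X_i(t)=+\infty \quad\text{a.s.;}
\]
part (1) is the case $i=K$ of the first, and part (2) follows from the second.

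Using \eqref{eq:ThinSum0}, \eqref{eq:EquivReps} and independence of arrivals from departures, I represent $X_i$ jointly in distribution as
\[
X_i\stackrel{d}{=}\widetilde N_1\bigl(\lambda^\alpha P_i\,Y_\alpha\bigr)\;-\;\widetilde N_2\bigl(\mu^\alpha\,\widetilde Y_\alpha\bigr),
\]
where $\widetilde N_1,\widetilde N_2$ are independent unit-rate Poisson processes and $Y_\alpha,\widetilde Y_\alpha$ are independent inverse $\alpha$-stable subordinators, all four mutually independent. Writing $W(t):=\lambda^\alpha P_i\,Y_\alpha(t)-\mu^\alpha\,\widetilde Y_\alpha(t)$, the LIL for Poisson processes together with the upper LIL $Y_\alpha(t)=O(t^\alpha\log\log t)$ for inverse stable subordinators give $|X_i(t)-W(t)|=O\bigl(t^{\alpha/2}\sqrt{\log\log t}\bigr)$ a.s., which is $o(t^\alpha)$; hence the $\varlimsup$/$\varliminf$ of $X_i(t)/t^\alpha$ coincide with those of $W(t)/t^\alpha$. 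It therefore suffices to show that $\varlimsup_{t\to\infty} W(t)/t^\alpha=+\infty$ and $\varliminf_{t\to\infty} W(t)/t^\alpha=-\infty$ a.s.

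The process $W$ is continuous and $\alpha$-self-similar, and $W(1)$ has strictly positive density on all of $\R$ (as a non-trivial linear combination of two independent variables with strictly positive densities on $(0,\infty)$); in particular, $\P(W(1)>K)>0$ and $\P(W(1)<-K)>0$ for every $K$. I plan to upgrade these positive probabilities to a.s.\ statements via the Lamperti transform $V(r):=e^{-\alpha r}W(e^r)$, $r\in\R$: by $\alpha$-self-similarity, $V$ is strictly stationary with one-dimensional law equal to that of $W(1)$. Under the substitution $t=e^r$, $\varlimsup_{t\to\infty}W(t)/t^\alpha=\varlimsup_{r\to\infty}V(r)$, and for an ergodic stationary process Birkhoff's theorem applied to $\mathbf 1\{V(0)>K\}$ and $\mathbf 1\{V(0)<-K\}$, for every $K$, forces $\varlimsup V=+\infty$ and $\varliminf V=-\infty$ a.s.

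The main obstacle, and the step I expect to require genuine work, is the ergodicity of $V$: $W$ is neither Markovian nor has independent increments, so standard self-similar ergodicity criteria do not apply directly. My plan is to derive it from Kolmogorov's $0$-$1$ law applied to the independent-increment driving subordinators $L_\alpha$ and $\widetilde L_\alpha$ that generate $Y_\alpha,\widetilde Y_\alpha$ via \eqref{eq:subordinator}. Concretely, one shows that any Lamperti-shift-invariant event is approximable in $L^1$ by functions of the increments of $L_\alpha$ and $\widetilde L_\alpha$ beyond levels that tend to infinity, so that the event lies in their joint tail $\sigma$-algebra and therefore has probability $0$ or $1$; the marginal positivity shown above then selects the value $1$, and the transfer of the limit statement from $W$ back to $X_i$ is immediate from the SLLN/LIL estimate in the second step.
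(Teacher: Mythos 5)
Your proposal is a genuinely different route from the paper's, and its reduction steps are sound. Like the paper, you pass from $Q_{\le i}=\Phi(X_i)$ to the oscillation of the unreflected net flow $X_i$, and then (after the Poisson-LIL step, which correctly gives $|X_i-W|=o(t^\alpha)$ a.s.) to the oscillation of the subordinator difference $W(t)=\lambda^\alpha P_i\,Y_\alpha(t)-\mu^\alpha\widetilde Y_\alpha(t)$. At that point the two arguments diverge. The paper proves $\varliminf_{t\to\infty}\big(Y_\alpha(t)-c\widetilde Y_\alpha(t)\big)=-\infty$ by a completely explicit construction: it views $L_\alpha(n)-\widetilde L_\alpha(n/c)$ as a random walk with strictly stable steps, invokes Feller's oscillation dichotomy, then builds stopping times $n_k$ at which $L_\alpha(n_k)\ge\widetilde L_\alpha(n_k/c)$ and uses the strong Markov property plus a geometric-tail Borel--Cantelli argument to produce times where one subordinator jumps decisively ahead of the other; the $\varlimsup$ direction then comes for free from the distributional symmetry $Y_\alpha-c\widetilde Y_\alpha\stackrel{d}{=}\widetilde Y_\alpha-cY_\alpha$. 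You instead propose the Lamperti transform $V(r)=e^{-\alpha r}W(e^r)$, which is stationary by self-similarity, and aim to conclude from ergodicity and Birkhoff (or, what is slightly leaner, directly from the $0$--$1$ law plus stationarity plus the unbounded support of $W(1)$, which makes Birkhoff dispensable). The appeal of your route is that it delivers the stronger oscillation $\varlimsup W/t^\alpha=+\infty$, $\varliminf W/t^\alpha=-\infty$ in one sweep and uses no couplings; the appeal of the paper's route is that it is constructive and avoids any ergodic-theoretic input.

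The one genuine gap is exactly where you flag it: triviality of the Lamperti-invariant $\sigma$-algebra of $V$. Your sketch (``approximable in $L^1$ by functions of the increments of $L_\alpha$, $\widetilde L_\alpha$ beyond levels tending to infinity'') has a real wrinkle, because the Lamperti shift acts on $L_\alpha$ by the scaling $L_\alpha(\cdot)\mapsto e^{-r}L_\alpha(e^{\alpha r}\cdot)$, which rescales but does not translate time; so the usual ``push a cylinder event to the right by shifting'' step does not apply verbatim. One has to argue instead that $\sigma(Y_\alpha(t):t\ge T)$ is, up to the single random variable $Y_\alpha(T)$, generated by the increments of $L_\alpha$ past the stopping level $Y_\alpha(T)$, and that the influence of $Y_\alpha(T)$ on scale-invariant asymptotic events washes out as $T\to\infty$ because a perturbation of $L_\alpha$ on $[0,S]$ changes $Y_\alpha(t)/t^\alpha$ by $o(1)$. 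That argument can be made to work, but it is precisely the content you have left as ``genuine work,'' so as it stands the proposal is an outline rather than a proof. If you want to close it, it may be cleaner to prove the $0$--$1$ law directly for the specific remote events $\{\varlimsup_{t\to\infty}W(t)/t^\alpha>K\}$ (which visibly lie in $\bigcap_S\sigma(L_\alpha(u)-L_\alpha(S),\widetilde L_\alpha(u)-\widetilde L_\alpha(S):u\ge S)$), rather than proving full ergodicity of $V$; combined with stationarity and $\P(W(1)>K)>0$ for all $K$ this already gives the conclusion.
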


\begin{remark} When $\alpha < \beta$, the (total) arrival process is slower than the departure process, so the queue will empty infinitely often with probability 1. This is Theorem 3.4 in \cite{ButtGeorgiouScalas2022} (see also Theorem \ref{thm:3435} in Appendix \ref{app:A}). The same theorem gives that if $\alpha > \beta$, then $\limsup$ of the queue length is $+\infty$ with probability 1. Moreover, by Theorem~3.3 in \cite{ButtGeorgiouScalas2022} (Theorem \ref{thm:33} in Appendix \ref{app:A}), the scaling limit of the queue length would be distributionally equal to the inverse $\alpha$-stable subordinator. 
Here, both theorems can be applied to the queue length of Class 1 since the arrival process for it is FPP by Theorem \ref{thm:babyarrival}, to show corresponding results.  

For completion we record the statements, but it is a direct application of Theorem 3.4. in \cite{ButtGeorgiouScalas2022} (first part of Theorem \ref{thm:3435} in Appendix \ref{app:A} so the proof will be omitted. 
\begin{theorem}
The following statements hold true, irrespective of the values of $\lambda, \mu, p_i$. For any fixed $i \le K$,
\begin{enumerate}
	\item If $\alpha < \beta$, the queue empties infinitely often with probability $1$. 
	\item If $\alpha > \beta$, we have
$
\varlimsup_{t\to \infty} Q_{\le i}(t) = +\infty 
$
with probability 1. 
\end{enumerate} 
\end{theorem} 
\end{remark}

%%%%%%%%%%%%%%%%%

\section{Proofs for the arrival process}
\label{sec:thinproof}

Denote by $\tau_k$ the instant 
of $k$-th jump of the FPP $N^{\alpha, \lambda}$ of arrivals. We have $0<\tau_1< \tau_2< \ldots$ a.s. Recall that $\bf N$ is the multivariate process of class counts, defined in \eqref{eq:bfNdef}. This process is piecewise constant. Its jumps, given by 
\begin{equation} \label{eq: jumps in R^K}
{\W}_k={\bf N}(\tau_k)- {\bf N}(\tau_k-),
\end{equation}
occur at the moments $\tau_k$. 
Our procedure of independent assignment of customers into classes means exactly that $\{\W_k\}_{k \ge 1}$ are i.i.d.\ random vectors with the common distribution 
\[
\P\{ \W_k = e_i\} = p_i,
\]
where $e_1, \ldots, e_K$ are the standard orthonormal basis vectors of $\R^K$. This is because the position of the only non-zero coordinate of $\W_k$  indicates the class of the $k$th customer arriving into the queue. Moreover, by our assumption of independent assignments, the sequence $\{\W_k\}_{k \ge 1}$ is independent of the FPP $N^{\alpha, \lambda}$.

Consider a random walk on $\R^K$:
\[
\fS(n) = \sum_{i=1}^n \W_i, \quad \fS(0) = 0.   
\]
For each $n$, its position $\fS(n)$ has the multinomial distribution with parameters $(n, K, p_1, \ldots, p_K)$. The increments of this walk assign  classes of customers in our multiclass queue. Namely, it follows from~\eqref{eq: jumps in R^K} that
\begin{equation} \label{eq: NN=S(N)}
{\bf N}(t)= \fS(N^{\alpha, \lambda}(t)), \qquad t \ge 0,
\end{equation}
where $\fS$ is independent of $N^{\alpha, \lambda}$. In particular, given $N^{\alpha, \lambda}(t)$, the class count ${\bf N}(t)$ has a multinomial distribution:
\begin{align}
		\P\{ {\bf N}(t)&= (n_1,\ldots, n_K) | N^{\alpha, \lambda}(t) = n\} \notag \\
		&=\begin{cases} 
						\displaystyle \genfrac{(}{)}{0pt}{}{n}{n_1,\ldots, n_K} p_1^{n_1} \ldots p_K^{n_K}, & \text{ if } n_1+ \ldots + n_K =n,
						\\
						0, & \text{ otherwise. }
		\label{eq:multi:pmf1}
				\end{cases}
\end{align}

\begin{remark}
A different multivariate model with the same conditional multinomial law given the sum as in \eqref{eq:multi:pmf1} can be found in~\cite{BeghinMacci2017}. However in that case the Poisson process was not time-changed by the inverse stable subordinator. In fact, the approach presented here -- and also the proof of Theorem \ref{thm:babyarrival} -- does not use the form of the distribution of the time-change, and therefore it can be used by conditioning on arbitrary counting processes. 
\end{remark}

Let us use representation~\eqref{eq: NN=S(N)} to prove Theorem \ref{thm:babyarrival}.

\begin{proof}[Proof of Theorem \ref{thm:babyarrival}]
We need to establish equalities of all finite-dimensional distributions of these $\R^K$-valued random processes. For any fixed $n \in \N$ and time instants 
$0 \le t_1 \le t_2\le \ldots\le  t_n$ we will prove a distributional equality conditional on 
\[ (Y_{\alpha}(t_1), \ldots Y_{\alpha}(t_n)) = (y_1, y_2, \ldots, y_n), \]
for $0 \le y_1 \le \ldots \le y_n$,
and then obtain the corresponding equality of $n$-dimensional distributions by integrating over the distribution of $ (Y_{\alpha}(t_1), \ldots Y_{\alpha}(t_n))$.

Using \eqref{eq:EquivReps} and \eqref{eq: NN=S(N)}, we see that the process ${\bf N}(t)$ has the same distribution as $\fS(N(Y_\alpha(t)))$, where $N$ is a shorthand for $N^{1, \lambda^\alpha}$. We now condition on $Y_\alpha$. It then suffices to show that
for any $n \in \N$ and  $0 \le y_1 \le \ldots \le y_n$,
\begin{align} \label{eq: classical thinning}
\big(\fS(N(y_1)), \ldots, &\fS(N(y_n)) \big) \notag \\
&\stackrel{d}{=} \Big(\big(N_{1}(p_1 y_1), \ldots, N_K(p_K y_1) \big), \ldots, \big(N_1(p_1 y_n), \ldots, N_K(p_K y_n) \big) \Big). 
\end{align}
This is equivalent
\footnote{Equalities~\eqref{eq: classical thinning} all together mean that
$
\{\fS(N(y)\}_{y \ge 0} \stackrel{d}{=} \big\{\big(N_1(p_1 y), \ldots, N_K(p_K y) \big) \big\}_{y \ge 0},
$
which describes the classical thinning of the Poisson processes $N$ using $K$ marks with probabilities $p_i$. Indeed, the l.h.s.\ is simply the multivariate process of class counts in our queue if we assume that the process of arrivals is $N$. 
The marginals of $\fS(N(y))$ are mutually independent Poisson processes of rate $p_i \lambda^\alpha$, according to the thinning. However, we prefer to give a full proof of ~\eqref{eq: classical thinning}. 
} 
to establishing the following joint distributional identity for the increments:
\begin{align*}
\quad \,\, \big(\fS(N^{1, \lambda^\alpha}(y_1)),  \, &\fS(N(y_2))-\fS(N(y_1)), \ldots, \fS(N(y_n))-\fS(N(y_{n-1})) \big) \\
&\stackrel{d}{=} \Big(\big(N_1(p_1 y_1), \ldots, N_K(p_K y_1) \big), \ldots ,\\
&\phantom{xxxxxxxx} \big(N_1(p_1 y_n)- N_1(p_1 y_{n-1}), \ldots, N_K(p_K y_n)-N_K(p_K y_{n-1}) \big) \Big).
\end{align*}
Since the random walk $\fS$ has independent increments, by conditioning on $N$ we see that the  increments on the l.h.s.\ (which are $n$ vectors in $\R^K$) are independent. The matching $n$ vectors on the r.h.s.\ are also independent (and moreover, have independent coordinates) because $N_1, \ldots, N_K$ are independent processes with independent increments. Therefore, it suffices to show the marginal equalities
\[
\fS(N(y_i))-\fS(N(y_{i-1}))
\stackrel{d}{=} \big(N_1(p_1 y_i)- N_1(p_1 y_{i-1}), \ldots, N_K(p_K y_i)-N_K(p_K y_{i-1}) \big)
\]
for all $1 \le i \le n$, where $y_0:=0$. 

Since the processes $\fS(N(t))$ and $N_i(t)$ have stationary increments, the above reduces to  
\be \label{eq:fullmarkov}
\fS(N(y))
\stackrel{d}{=} \big(N_1(p_1 y), \ldots, N_K(p_K y) \big)
\ee
for all $y \ge 0$. 

By independence of increments of $N$, this is in turn equivalent to 
\[
\fS(N(y)) \stackrel{d}{=} \big(N(p_1 y), N((p_1 +p_2) y) - N(p_1 y), \ldots, N( y) - N((p_1+\ldots +p_{K-1}) y) \big).
\]
This last equality follows from the fact that given $N(y)=m$, the instants of jumps of $N$ on $[0,y]$ are i.i.d.\ and uniformly distributed over $[0,y]$. In fact, the r.h.s.\ represents the numbers of these jumps in the intervals $(0, p_1y ], \ldots, ((p_1+ \ldots + p_{K-1}) y, y]$. The joint distribution of these numbers for all $K$ intervals is  the multinomial distribution with parameters $(m, K, p_1, \ldots, p_K)$, which is also the distribution of $\fS(m)$.
Therefore, \eqref{eq:fullmarkov} follows, and this suffices for the theorem. 
\end{proof}

\begin{proof}[Proof of Corollary \ref{cor:babyarrival}] 

The distributional identity of Theorem \ref{thm:babyarrival} can directly give the Laplace transform~\eqref{eq:NImgf} by conditioning on $Y_{\alpha}$ and  then referring to~\eqref{eq:subMGF}. However, even more basic means can be used, as we show below. For $s= (s_1, s_2, \ldots, s_K)$, by \eqref{eq:multi:pmf1} we have  
\begin{align*}
\E  \, e^{s \cdot {\bf N}(t)}
 &= \sum_{n=0}^\infty E\big( e^{s \cdot {\bf N}(t)}\big|N^{\alpha, \lambda}(t)=n \big)\P\{ N^{\alpha, \lambda}(t) = n\}\\
&=\sum_{n=0}^\infty \P\{ N^{\alpha, \lambda}(t) = n\} \times \sum_{n_1+ \ldots + n_K=n} e^{s \cdot (n_1, \ldots, n_{\ell})}  \genfrac{(}{)}{0pt}{}{n}{n_1,\ldots, n_K} p_1^{n_1} \ldots p_K^{n_K} \\
&=\sum_{n=0}^\infty \P\{ N^{\alpha, \lambda}(t) = n\} \times \sum_{n_1+ \ldots + n_K=n} \genfrac{(}{)}{0pt}{}{n}{n_1,\ldots, n_K} (e^{s_1} p_1)^{n_1} \ldots (e^{s_K} p_K)^{n_K} \\
&= \sum_{n=0}^\infty \P\{ N^{\alpha, \lambda}(t) = n\} \Big( \sum_{j=1}^K e^{s_j}p_j \Big)^n\\
&= \E\Bigg[\Big( \sum_{j=1}^K e^{s_j}p_j \Big)^{N^{\alpha, \lambda}(t)}\Bigg] = G_{\alpha, \lambda}\Big( \sum_{j=1}^K p_j e^{s_j}\Big).
\end{align*}
Using \eqref{eq:scaledpgf}, the last line becomes 
\[
\E  \, e^{s \cdot {\bf N}(t)} = E_{\alpha}\Big( \lambda^\alpha t^{\alpha} \sum_{j=1}^K p_j(e^{s_j} - 1)\Big),  
\]
thus proving \eqref{eq:NImgf}.

Same arguments as above (i.e. conditioning on the number of events and then using the properties of the multinomial distribution) give pairwise (or more) correlation functions. For example, for the covariance between coordinates, use the conditional covariance formula
\begin{align*} 
{\rm Cov}(N^{(i)}(t), N^{(j)}(t)) &=\E({\rm Cov}(N^{(i)}(t), N^{(j)}(t))| N^{\alpha, \lambda}(t)) \\
&\phantom{xxxxxxx}+ {\rm Cov}(E(N^{(i)}(t)|N^{\alpha, \lambda}(t)), E(N^{(j)}(t)|N^{\alpha, \lambda}(t)))\\
&= -p_ip_j\E(N^{\alpha, \lambda}(t)) + p_ip_j {\rm Var}(N^{\alpha, \lambda}(t))\\
&=p_ip_j \big [\E ({\rm Var}(N^{\alpha, \lambda}(t)|Y_{\alpha})) \\
&\phantom{xxxxxxx}+{\rm Var}(E(N^{\alpha, \lambda}(t)|Y_{\alpha}(t))) - \E(E( N^{\alpha, \lambda}(t)| Y_{\alpha}(t))) \big ]\\
&=p_ip_j \lambda^{2\alpha} {\rm Var}(Y_{\alpha}(t)).
\end{align*}
This gives one of the two equalities in \eqref{eq:ThinCov0}. The other equality follows from  \eqref{eq:subMGF} and the definition of the Mittag-Leffler function $E_\alpha$:
\[{\rm Var}(Y_{\alpha}(t)) = \big((E_\alpha( u t^\alpha))''/2 - [(E_\alpha( u t^\alpha))']^2  \big) \big . \big |_{u=0} =  t^{2\alpha}\left( \frac{1}{\Gamma(1+2\alpha)} - \frac{1}{(\Gamma(1+\alpha))^2}\right). \]

To obtain \eqref{eq:1dimFPP0}, use Theorem \ref{thm:babyarrival} and project to a single coordinate, and then apply  \eqref{eq:EquivReps}.

 Finally, let us prove \eqref{eq:ThinSum0}. From Theorem \ref{thm:babyarrival} and the fact that addition of the first $\ell$ coordinates is a measurable map from $\R^K$ to $\R$, we have a distributional equality of processes 
 \[
 \Big\{\sum_{j=1}^\ell N^{(j)}(t)\Big\}_{t\ge 0}  \stackrel{d}{=}  \Big\{\sum_{j=1}^\ell N_j(p_jY_{\alpha}(t))\Big\}_{t\ge 0}.
 \]
 Denote by $M(t)$ the random process on the r.h.s.\ above. To identify its distribution, use that the sum of $\ell$ independent Poisson processes $ N_j(p_j y)$ of rate $p_j \lambda^\alpha$, $1 \le j \le \ell$, is a Poisson process of rate $\lambda^\alpha P_\ell$.
 Then for any fixed $n \in \N$, integers $0\le k_1 \le \ldots \le k_n$, and reals $0 \le t_1 \le \ldots \le  t_n$ and $0 \le y_1 \le \ldots \le y_n$,
 \begin{align*}
 \P \Big ( M(t_1)=k_1, \ldots &, M(t_n)=k_m  \Big | \big. Y_{\alpha}(t_1) = y_1, \ldots, Y_{\alpha}(t_n) = y_n \Big ) \\
 &  = \P \Big ( N^{1, \lambda^\alpha P_\ell}(y_1) =k_1, \ldots, N^{1, \lambda^\alpha P_\ell}(y_n) =k_n \Big ).
 \end{align*}
Integrating this equality over the joint distribution of $(Y_{\alpha}(t_1), \ldots Y_{\alpha}(t_n))$, we arrive at
\[
\{M(t_i)\}_{1 \le i \le n} \stackrel{d}{=} \{ N^{1, \lambda^\alpha P_\ell}(Y_\alpha(t_i))\}_{1 \le i \le n},
\]
which in turn implies that
\[
\{M(t)\}_{t \ge 0} \stackrel{d}{=} \{ N^{1, \lambda^\alpha P_\ell}(Y_\alpha(t))\}_{t \ge 0}.
\]
By \eqref{eq:EquivReps}, the process on the r.h.s.\ is FPP$(\alpha, \lambda  P_\ell^{1/\alpha})$, as needed for \eqref{eq:ThinSum0}.
\end{proof}

\begin{proof}[Proof of Theorem \ref{thm:LLN}]
Since the limit process $\lambda {\bf p} Y_\alpha(t)$ has continuous trajectories, by \cite[Theorem~16.7]{billingsley1999}\footnote{All the results of~\cite{billingsley1999} cited here are stated for the space $D([0,T],\R)$, but they shall transfer directly to $D([0,T],\R^K)$.} it suffices to establish,  for every $T>0$,  the weak convergence $\{u^{-\alpha} {\bf N}(ut)\}_{t \in [0,T]} \stackrel{d}{\longrightarrow} \{ \lambda^\alpha {\bf p} Y_\alpha(t) \}_{t \in [0,T]}$ of these processes considered as elements of $D([0,T],\R^K)$. In view of Theorem~\ref{thm:babyarrival}, it suffices to prove this for $u^{-\alpha} {\bf N}(ut)$ replaced by
\[
\tilde {\bf N}_u(t) = u^{-\alpha} \big(N_1(p_1 Y_\alpha(ut)), \ldots, N_K(p_K Y_\alpha(ut)) \big).
\]

We first prove that  for every $T>0$,
\begin{equation} \label{eq:sup_diff}
\sup_{0 \le t \le T } \Big |\tilde {\bf N}_u(t) - \lambda^{\alpha} u^{-\alpha} {\bf p} Y_{\alpha}(ut) \Big | \stackrel{\P}{\longrightarrow} 0 
\end{equation}
as $u \to \infty$. To this end, for any $t>0$, denote 
\[
L(t)= \Big(\frac{N_1(p_1 Y_{\alpha}(t))}{ Y_\alpha(t)}, \ldots, \frac{N_K(p_K Y_{\alpha}(t))}{Y_\alpha(t)} \Big).
\]
This quantity is well-defined because $Y_\alpha(t)>0$ a.s.

We have $\tilde  {\bf N}_u(t)=u^{-\alpha} L(ut) Y_\alpha(ut)$. Then, since the processes $N_1, \ldots, N_K, Y_\alpha$ are non-decreasing, for any $u>1$,
\begin{align}
\sup_{0 \le t \le T } \Big |\tilde  {\bf N}_u(t) &- \lambda^{\alpha} u^{-\alpha} {\bf p} Y_\alpha(ut) \Big | \notag \\
&\le |\tilde  {\bf N}_u(1/\sqrt u) | + \lambda^\alpha u^{-\alpha} Y_\alpha (\sqrt u) |{\bf p}|  + \sup_{1/\sqrt u \le t \le T } \Big | \tilde  {\bf N}_u(t) - \lambda^\alpha u^{-\alpha} {\bf p} Y_\alpha(ut) \Big | \notag \\
&\le u^{-\alpha} Y_\alpha(\sqrt u) |L(\sqrt u)| + \lambda^\alpha u^{-\alpha} Y_\alpha (\sqrt u) |{\bf p}| \notag \\
&\phantom{xxxxxxxxxxxxxxxxxx}\!+ u^{-\alpha} Y_\alpha(uT) \sup_{1/\sqrt u \le t \le T} \big |L(ut) - \lambda^\alpha{\bf  p} \big |.  \label{eq:sup_diff2}
\end{align}
By the strong law of large numbers applied to the Poisson processes $N_1, \ldots, N_K$ and the fact that  $Y_\alpha(t) \to \infty$ a.s.\ as $t \to \infty$, we have $L(t) \to \lambda^\alpha {\bf p}$ a.s.\ as $t \to \infty$; to see this, divide and multiply by $p_i$ each coordinate $i$ of $L(t)$. Therefore, the last supremum in \eqref{eq:sup_diff2} vanishes a.s.\ as $u \to \infty$. Combined with the distributional equalities $u^{-\alpha} Y_\alpha(\sqrt u) \stackrel{d}{=} u^{-\alpha/2} Y_\alpha(1)$ and $u^{-\alpha} Y_\alpha(uT) \stackrel{d}{=} Y_\alpha(T)$, this implies that the r.h.s.\ of \eqref{eq:sup_diff2} converges to $0$ in probability, establishing \eqref{eq:sup_diff}. 

Furthermore, the standard Skorokhod metric $d$, which metrizes the topology $J_1$ on $D([0,T],\R^K)$, is shorter 
than the uniform metric on $[0, T]$; see Eq.~(12.13) in~\cite{billingsley1999}. Therefore, from \eqref{eq:sup_diff} we get
\begin{equation} \label{eq:Skorokhod in P}
d\big (\tilde {\bf N}_u, \lambda^{\alpha} u^{-\alpha}{\bf p} Y_\alpha(u \, \cdot) \big ) \stackrel{\P}{\longrightarrow} 0
\end{equation}
as $u \to \infty$. Let us clarify why $d\big (\tilde {\bf N}_u, \lambda^{\alpha} u^{-\alpha}{\bf p} Y_\alpha(u \, \cdot) \big )$ is a well-defined random variable. Indeed, we first claim that the pair $ (\tilde {\bf N}_u, \lambda^\alpha u^{-\alpha} {\bf p} Y_\alpha(u \, \cdot) \big )$ is a random element of the product space $D([0,T],\R^K) \times D([0,T],\R^K)$ equipped with $J_1 \times J_1$. This is because the Skorokhod space is separable by Theorem~12.2 in~\cite{billingsley1999}, and then the claim follows using the results of Appendix M10 in~\cite{billingsley1999}. Then $d\big (\tilde {\bf N}_u, \lambda^{\alpha} u^{-\alpha}{\bf p} Y_\alpha(u \, \cdot) \big )$ is a random variable because $d(\cdot, \cdot)$ is continuous on the product space, hence it is measurable.

Finally, we use that $u^{-\alpha} Y_\alpha(u \, \cdot) \stackrel{d}{=} Y_\alpha$ for each fixed $u >0$. Combined with \eqref{eq:Skorokhod in P}, by~Theorem~3.1 in~\cite{billingsley1999} this implies that $\tilde {\bf N}_u \stackrel{d}{\longrightarrow} \lambda^\alpha {\bf p} Y_\alpha$ in $(D([0,T],\R^K), J_1)$, as needed. This theorem applies because the product space is separable, as explained above.
\end{proof}

Finally, we prove Theorem \ref{thm:fclt}. The proof again uses the fact that the inverse $\alpha$-stable subordinator is a common random clock for all independent coordinates. 

\begin{proof}[Proof of Theorem \ref{thm:fclt}] 
By \eqref{eq:Yselfsimilarity}, for each coordinate $M_u^{(i)}$ we have a distributional equality of processes:
\[ 
M_u^{(i)}(t)
   \;=\;
   \frac{N_i(p_i Y_{\alpha}(u t))-p_i \lambda^{\alpha} Y_\alpha(u t)}{u^{\alpha/2}} \stackrel{d}{=} \frac{N_i(p_i  u^\alpha Y_{\alpha}(t))-p_i u^\alpha \lambda^{\alpha} Y_\alpha(t)}{u^{\alpha/2}}
\]
for every $u>0$. Even more, define the processes 
\[
L_u^{(i)}(s) = \frac{N_i(p_i u^{\alpha}s)- p_i \lambda^\alpha u^{\alpha}s}{u^{\alpha/2}}, \qquad s \ge 0,
\]
then, since the processes $N_1, \ldots, N_K$, $Y_\alpha$ are independent, it again follows from  \eqref{eq:Yselfsimilarity} that
\[
(M_u^{(1)}, \ldots, M_u^{(K)}) \stackrel{d}{=} (L_u^{(1)}, \ldots, L_u^{(K)}) \circ Y_{\alpha},
\]
for every $u>0$. 

We will need the following fact about continuity of the composition mapping in the Skorokhod topology $J_1$, see Theorem~13.2.2 in~\cite{Whitt2002}. Let $\{x_k\}_{k \ge 1}$ and $x$ be functions in $D([0, \infty), \R^K)$, and let $\{y_k\}_{k \ge 1}$ and $y$ be in $D_\uparrow$, the set of non-negative non-decreasing functions in $D([0, \infty), \R)$.
If $(x_k, y_k) \to (x, y)$ in $J_1 \times J_1$ as $k \to \infty$ and  $x$ is a continuous function, then $x_k \circ y_k \to x \circ y$.

We now use that 
\[
\big( (L_u^{(1)}, \ldots, L_u^{(K)}) , Y_{\alpha} \big)  \stackrel{d}{\longrightarrow} \big(  (\sqrt{p_1\lambda^\alpha}B_1, \ldots, \sqrt{p_K\lambda^\alpha}B_K), Y_{\alpha} \big)
\]
in $D([0, \infty), \R^K) \times D_\uparrow$ equipped with $J_1 \times J_1$ as $u \to \infty$. Here the weak convergence of the first $K$ coordinates to the $K$-dimensional Brownian motion follows from the classical functional central limit theorem (FCLT), and the last coordinate is added using independence of the $K+1$ coordinate processes on the l.h.s. The limit process a.s.\ belongs to the set $C([0,\infty), \R^K) \times D_\uparrow$, which is contained in the set of continuity points of the composition mapping. Therefore, the continuous mapping theorem implies that
\[
(M_u^{(1)}, \ldots, M_u^{(K)}) = ((L_u^{(1)}, \ldots, L_u^{(K)}) \circ Y_{\alpha}) \stackrel{d}{\longrightarrow}
(\sqrt{p_1\lambda^\alpha}B_1, \ldots, \sqrt{p_K\lambda^\alpha}B_K)\circ Y_\alpha.
\]
\end{proof}

\begin{remark} Theorem \ref{thm:fclt} can be proved by first conditioning on the $\sigma$-algebra of the subordinator, use the conditional FCLT for Poisson processes and then remove the conditioning, just as the proof of the last part of Corollary \ref{cor:babyarrival}. For this method one would need to utilise the conditional $\to$ unconditional route in \cite{EK86}. However, the proof above is more elegant and has already been used in similar settings (see for example \cite{LeonenkoScalasTrinh2019}).  
\end{remark}

\section{Proofs for the queueing system}
\label{sec:qproofs}

In this section we prove the results for the restless multiclass queue. In Subsection \ref{subsec:scal} we deal with the first-order scaling limits in the spirit of Theorem 3.3 in \cite{ButtGeorgiouScalas2022}  (see for convenience Appendix \ref{app:A}, theorem \ref{thm:33}) and an FCLT as corollary of a compensated queue length. 
In Subsection \ref{subsec:rec}, we prove that the restless (multiclass) queue empties infinitely often when the indices of the arrival and service FPPs are equal. 

\subsection{Scaling limits for queue lengths}
\label{subsec:scal}
\begin{proof}[Proof of Theorem \ref{thm:QConvergence}] 
The queue length of the first $i$ classes is given by 
\be\label{eq:pbyp}
Q_{\le i}(t)= N^{\alpha,\lambda}_{\le i} (t) - D^{\beta, \mu}(t) - \inf_{s \le t}\bigg \{N^{\alpha,\lambda}_{\le i}(s) - D^{\beta, \mu}(s) \bigg\}.
\ee

By \eqref{eq:ThinSum0}, the aggregated arrival process $N^{\alpha,\lambda}_{\le i} $ above is an FPP$ (\alpha, \lambda P_i^{1/\alpha} )$. Therefore, equation~\eqref{eq:pbyp} is the same as equation (3.19) in \cite{ButtGeorgiouScalas2022}, with FPP counting processes. Therefore, Theorem~3.3 in~\cite{ButtGeorgiouScalas2022}  (see for convenience Appendix \ref{app:A}, theorem \ref{thm:33}) applies and we obtain~\eqref{eq:Qilimit} as its direct corollary. 

Consider now the individual queue length $Q_i$ for Class $i$ when $\alpha=\beta$. It is given by $Q_{\le i} - Q_{\le i-1}$ if we set $Q_{\le 0}(t)\equiv 0 $. For $u > 0$ and $t \ge 0$, denote %\note{Let us think of a better notation!}
\[
N^{\alpha, \lambda}_{\le i, u}(t)= \frac{1}{u^\alpha} \sum_{j=1}^i N^{(i)}(ut), \qquad D_u(t)= u^{-\alpha} D^{\alpha, \mu}(ut).
\]
It then follows from~\eqref{eq:pbyp} that in terms of the Skorokhod reflection map $\Phi$,
\[
\frac{Q_i(ut)}{u^\alpha}=\frac{Q_{\le i}(ut)}{u^\alpha} - \frac{Q_{\le i-1} (ut)}{u^\alpha} = \Phi(N^{\alpha, \lambda}_{\le i,u} -  D_u)(t)- \Phi(N^{\alpha, \lambda}_{\le i-1,u} - D_u)(t).
\]
In terms of the normalized time-changed multivariate class count process 
\[
{\bf N}_u(t)=\frac{1}{u^{\alpha}}(N^{(1)}(ut),  \ldots , N^{(K)}(ut)),
\]
the above becomes
\begin{equation} \label{eq: Q_i = F}
\frac{Q_i(u t)}{u^\alpha} = F ({\bf N}_u, D_u)(t),
\end{equation}
where $F$ is a mapping from $D([0, \infty), \R^K) \times D([0, \infty), \R)$ to $ D([0, \infty), \R)$ defined by
\[
F((f_1, \ldots, f_K),g)= \Phi(f_1 + \ldots + f_i - g) - \Phi(f_1 + \ldots + f_{i-1} - g)
\]
for $f_1, \ldots, f_K, g$ in  $D([0, \infty), \R)$.

In general, addition and subtraction are not continuous on  $D([0, \infty), \R^K)$ equipped with $J_1$. However, they are so at every point $(f, g)$ such that $f$ and $g$ do not have common jump times. The coordinate projections are continuous, and the Skorokhod reflection map is continuous when $K=1$, see Theorem~13.5.1 in~\cite{Whitt2002}. Also, $\Phi$ maps continuous functions into continuous ones. All together, this gives that the map $F$ is continuous with respect to $J_1 \times J_1$ at every point of $C([0, \infty), \R^K) \times C([0, \infty), \R)$. This set supports the trajectories of the process $(\lambda^\alpha {\bf p} Y_\alpha, \mu^\alpha \widetilde{Y}_\alpha)$, which by Theorem~\ref{thm:LLN} (applied twice) is the weak limit of $({\bf N}_u, D_u)$ as $u \to \infty$ in $D([0, \infty), \R^K) \times D([0, \infty), \R)$ equipped with $J_1 \times J_1$. Therefore, equation~\eqref{eq: Q_i = F} and the continuous mapping theorem imply that the processes $u^{-\alpha} Q_i(u \, \cdot)$ converge weakly to $F(\lambda^\alpha {\bf p} Y_\alpha, \mu^\alpha \widetilde{Y}_\alpha)$. This proves \eqref{eq:individualq}.
\end{proof}

\begin{proof}[Proof of Corollary \ref{cor:qclt}] For any $t, u \ge 0$, denote
\be \label{eq:ABC}
A_u(t) = \sum_{j=1}^i N_j( p_j Y_\alpha(ut))  - \lambda^\alpha P_i Y_\alpha(ut),\quad  C_u(t) = \widetilde N(\widetilde Y_{\beta}(ut)) - \mu^\beta \widetilde Y_\beta(ut).
\ee

From the definition in \eqref{Not1}, we have by rearranging terms that 
\[
Q^{\rm cent}_{\le i}(ut)/u^{\gamma/2}= \Phi \big( N_{\alpha, \beta}^{\rm diff} - Y_{\alpha, \beta}^{\rm diff} \big)(ut)/u^{\gamma/2} = \Phi \big(A_u/u^{\gamma/2}- C_u/u^{\gamma/2} \big)(t).
\]
Then by Theorem \ref{thm:fclt} and independence of $A_u$ and $C_u$, we have the joint weak convergence 
\be \label{eq:fclt2}
\Big ( \frac{A_u}{u^{\alpha/2}} , \frac{C_u}{u^{\beta/2}} \Big) \stackrel{d}{\longrightarrow} \big(  { B}(\lambda^\alpha P_i X_\alpha(\cdot)), \widetilde {B}(\mu^{\beta} \widetilde X_\beta( \cdot)) \big)
\ee
in $J_1 \times J_1$  as $u \to \infty$. Recalling that $\gamma = \max(\alpha, \beta)$, in the case where $\alpha > \beta$ we have 
\[
\frac{C_u}{u^{\gamma/2}} = \frac{C_u}{u^{\beta/2}} \frac{1}{u^{(\alpha - \beta)/2}} \stackrel{d}{\longrightarrow} 0,
\] 
and therefore in this case,
\be \label{eq:wrongscale}
\Big ( \frac{A_u}{u^{\gamma/2}} , \frac{C_u}{u^{\gamma/2}} \Big) \stackrel{d}{\longrightarrow} \big( B(\lambda^\alpha P_i X_\alpha(\cdot)), 0 \big).
\ee

Since the Skorokhod map $\Phi$ is continuous in $J_1$ and the limit processes in \eqref{eq:fclt2} and \eqref{eq:wrongscale} have continuous trajectories a.s., for $\alpha=\beta$ and $\alpha>\beta$ the corollary follows by the continuous mapping theorem applied as in the proof of Theorem \ref{thm:QConvergence}. For $\alpha < \beta$, by the same argument and the symmetry of Brownian motion, 
\[
\frac{Q^{\rm{cent}}_{\le i}(u \, \cdot)}{u^{\beta/2}}\stackrel{d}{\longrightarrow} 
\Phi(-\widetilde {B}(\mu^{\beta} \widetilde X_\beta(\cdot))) \stackrel{d}{=} \Phi(\sqrt{\mu^\beta} \widetilde B \circ \widetilde X_\beta).
\]
\end{proof}

\subsection{Recurrence and transience.}
\label{subsec:rec}
\begin{proof}[Proof of Theorem \ref{thm:Rec+Trans}]
We need to show that 
\be \label{eq:Inf1}
\varliminf_{t \to \infty} [N^{\alpha, \lambda}(t) - D^{\alpha, \mu}(t)]= -\infty  \quad{\rm a.s.}.
\ee
After that the first claim in the theorem follows, since the infimum process only decreases by 1 at the times when a service was completed but the queue was already empty, by~\eqref{eq:fullqueuelength} and Lemma 6.2. in \cite{ButtGeorgiouScalas2022} (see Theorem \ref{thm:62} in Appendix \ref{app:A}).

From \eqref{eq:EquivReps}, we see that \eqref{eq:Inf1} is equivalent to 
\[
\varliminf_{t \to \infty} [N(\lambda^\alpha Y_\alpha(t)) - \tilde N(\mu^\alpha \tilde Y_\alpha(t))]= -\infty \quad { \rm a.s.},
\]
where $N$ and $\tilde N$ are standard rate 1 Poisson processes, $Y_\alpha$ and $\tilde Y_\alpha$ are the inverses of the respective standard $\alpha$-stable subordinators $L_\alpha$ and $\tilde L_\alpha$, and the four processes $N$, $\tilde N$, $L_\alpha$, $\tilde L_\alpha$ are jointly independent. 

For any standard Poisson process, say $N$, the law of large numbers gives that a.s.
\be\label{eq:es}
\frac{1}{2} < \lim_{t\to \infty} \frac{N(t)}{t} = 1 < 2.
\ee
Since  $Y_\alpha(t) \to \infty$ and $\tilde Y_\alpha(t) \to \infty$ as $t \to \infty$, from the law of large numbers applied for $N$ and $\tilde N$, we see from the inequalities in \eqref{eq:es} that
\be \label{eq:office}
\varliminf_{t \to \infty} [N(\lambda^\alpha Y_\alpha(t)) - \tilde N(\mu^\alpha \tilde Y_\alpha(t))] \le \varliminf_{t \to \infty} [2\lambda^\alpha Y_\alpha(t)  - \mu^\alpha \tilde Y_\alpha(t)/2] \text{ a.s.}
\ee

Therefore, it suffices to show that for every $c>0$,
\begin{equation} \label{eq: liminf}
\varliminf_{t \to \infty} [Y_\alpha(t)  - c \tilde Y_\alpha(t)] = -\infty \text{ a.s.}
\end{equation}

Consider the sequence 
\[ 
S_n = L_\alpha(n)-\tilde L_\alpha(n/c) \quad {\rm for \,\,} n\in \N_0.
\] 
Its differences $J_{n+1} = S_{n+1} - S_n$ satisfy $J_{n+1}=W_{n+1} + \widetilde W_{n+1}$, where
\[  W_{n+1} = L_\alpha(n+1) -L_\alpha(n), \qquad \widetilde W_{n+1} = \tilde L_\alpha(n/c) -\tilde L_\alpha((n+1)/c).
\]
The sequences $\{W_n\}_n$ and $\{\widetilde W_{n}\}_n$ are i.i.d.\ respectively as increments of L\'evy processes over fixed size intervals, and jointly independent. 
These random variables are strictly $\alpha$-stable, and therefore for any $a,b>0$,
\[ 
a J_1 + b J_2 =(a W_1 + b  W_2 )  + (a \widetilde W_1 + b \widetilde W_2 ) \stackrel{d}{=} (a^\alpha + b^\alpha)^{1/\alpha} W_1 + (a^\alpha + b^\alpha)^{1/\alpha} \widetilde W_1 =  (a^\alpha + b^\alpha)^{1/\alpha} J_1. 
\]
This makes $S_n$ a random walk whose i.i.d.\ increments $J_n$ are strictly stable and their common distribution puts mass on both half-axes $(-\infty, 0)$ and $(0, \infty)$, that is $\P(J_n>0)>0$ and $\P(J_n<0)>0$. 

Such random walks are known to oscillate, that is
\[
\varlimsup_{n \to \infty} [L_\alpha(n)-\tilde L_\alpha(n/c)] = +\infty \text{ a.s.}, \qquad \varliminf_{n \to \infty} [L_\alpha(n)-\tilde L_\alpha(n/c)] = -\infty \text{ a.s.},
\]
by Example (a) after Theorem XII.7.2 in Feller~\cite{Feller}. Hence the integer random variables, defined by $n_0=0$ and 
\[
n_k =\inf \big \{n: n> n_{k-1}, L_\alpha(n) \ge \tilde L_\alpha(n/c), n \in \N \big\}= \inf \big \{n: n> n_{k-1}, S_n \ge 0, n \in \N \big\}
\]
for $k \in \N$, are all finite a.s. They are stopping times with respect to the sequence $S_n$.

Fix an $M\in\N$. Then for every $k\in \N$, we have
\begin{align*}
\P \big(L_\alpha(n_k+1)-L_\alpha(n_k) \ge \tilde L_\alpha ((n_k+M)/c)-\tilde L_\alpha(n_k/c)\big) &=\P \big(L_\alpha(1) \ge \tilde L_\alpha (M/c)\big) =:a >0
\end{align*}
by the strong Markov property of the processes $L_\alpha(t)$ and $\tilde L_\alpha(t/c)$ and the facts that they are independent and have independent stationary increments. 

Define the integer random variables $K_0=0$ and for $i \in \N$,
\[
K_i=\inf \big \{k: k > K_{i-1}, L_\alpha(n_k+1)-L_\alpha(n_k) \ge \tilde L_\alpha ((n_k+M)/c)-\tilde L_\alpha(n_k/c), k \in \N \big \}.
\]

Let us argue that these variables are finite a.s. 
The first one, $K_1$, has geometric tails: For every integer $k \ge M$, we have
\begin{align*}
\P\{K_1 >k\} &\le \P \Big\{L_\alpha(n_k+1)-L_\alpha(n_k) < \tilde L_\alpha ((n_k+M)/c)-\tilde L_\alpha(n_k/c), \, K_1 >k-M  \Big\}  \\
&\le (1-a) \P\{K_1 >k-M\} \le \ldots \le (1-a)^{[k/M]},
\end{align*}
where we used that the two events under the probability on the r.h.s.\ of the first inequality are independent.
To see this, note that $\{K_1 >k-M\}$ is defined in terms of $\{L_\alpha(t)\}_{t \le n_{k-M}+1}$ and $\{\tilde L_\alpha(t/c)\}_{t \le n_{k-M}+M}$, where $n_{k-M}+1 \le n_{k-M}+M \le n_k$. The other event is defined in terms of the increments of $L_\alpha$ and $\tilde L_\alpha$ after $n_k$, and  therefore it is independent of $\{K_1 >k-M\}$. This shows that $K_1$ is finite a.s.\ since $a \in (0,1)$. Similarly, we obtain that each $K_i$ is finite a.s.

Since $L_\alpha(n_k)\ge \tilde L_\alpha(n_k/c)$ a.s.\ for every $k \in \N$ by the definition of $n_k$, we have
\[
L_\alpha(n_{K_i}+1) \ge \tilde L_\alpha((n_{K_i}+M)/c) \text{ a.s.}
\]
for every $i \in \N$ by the definition of $K_i$. 

Furthermore, both inverse subordinators $Y_\alpha$ and $\tilde Y_\alpha$ are non-decreasing and satisfy $Y_\alpha(L_\alpha(n))=n$ and $\tilde Y_\alpha(\tilde L_\alpha(n/c))=n/c$ for all $n \in \N$ with probability $1$, since the trajectories of any L\'evy processes are continuous at a fixed deterministic point  with probability~$1$. 
In particular, we have a full probability event $\Omega_0$ (partitioned across $n \in \N$) on which  
\[ Y_\alpha(L_\alpha(n))=n \text{  and  } \tilde Y_\alpha(\tilde L_\alpha(n/c))=n/c, \text{ for all } n \in \N.
\]
Then on $\Omega_0$, for every $i \in \N$, 
\[
c\tilde Y_\alpha(L_\alpha(n_{K_i}+1)) \ge c\tilde Y_\alpha(\tilde L_\alpha((n_{K_i}+M)/c))=n_{K_i} + M = Y_\alpha(L_\alpha(n_{K_i}+1))+M-1 \text{ a.s.}
\]
Since $L_\alpha(n_{K_i}+1) \to \infty$ a.s.\ as $i \to \infty$, this implies that
\[
\varliminf_{t \to \infty} [Y_\alpha(t)  - c \tilde Y_\alpha(t)] \le 1-M \text{ a.s.,}
\]
and \eqref{eq: liminf} follows since $M \in \N$ was chosen arbitrarily. This concludes the proof of the first statement in the theorem.  

For the second statement, we have $Q_{\le 1}(t)\le Q_{\le i}(t)$ for all $t \ge 0$, therefore it suffices to prove the claim for $i=1$. 

Then
\begin{align*}
\varlimsup_{t \to \infty} Q_{\le 1}(t) &= \varlimsup_{t \to \infty} \Phi(N^{(1)} - D^{\alpha, \mu})(t) \ge \varlimsup_{t \to \infty} [N^{(1)}(t) - D^{\alpha, \mu}(t)] \\
&\stackrel{d}{=} \varlimsup_{t \to \infty} [N(\lambda^\alpha p_1 Y_\alpha(t)) - \tilde N(\mu^\alpha \tilde Y_\alpha(t))], 
\end{align*}
and by the same reasoning as in \eqref{eq:office}, we also have
\[
\varlimsup_{t \to \infty} [N(\lambda^\alpha p_1Y_\alpha(t)) - \tilde N(\mu^\alpha \tilde Y_\alpha(t))] \ge \varlimsup_{t \to \infty} [\lambda^\alpha p_1 Y_\alpha(t)/2  - 2\mu^\alpha \tilde Y_\alpha(t)] \text{ a.s.}
\]
Finally, using \eqref{eq: liminf} applied with $c=\lambda^\alpha \mu^{-\alpha}p_1 /4 $ together with the distributional equality $Y_\alpha - c\tilde Y_\alpha \stackrel{d}{=} \tilde Y_\alpha - c Y_\alpha$, we get
\[
\varlimsup_{t \to \infty} [\lambda^\alpha p_1 Y_\alpha(t)/2  - 2\mu^\alpha \tilde Y_\alpha(t)] = -2 \mu^\alpha \varliminf_{t \to \infty} [\tilde Y_\alpha(t) - c Y_\alpha(t)] =  + \infty \text{ a.s.}
\]
The second statement in the theorem then follows. 
\end{proof}

A byproduct of this proof is the following corollary. 
\begin{corollary}
\label{cor:Ydiff}
Let $c>0$ and $Y_\alpha, \widetilde Y_{\alpha}$ be independent inverse $\alpha$-stable subordinators. Then with probability~$1$, we have
\[
\varliminf_{t \to \infty}(Y_\alpha(t) - c \widetilde Y_{\alpha}(t)) = -\infty, \quad \text{ and } \quad   \varlimsup_{t \to \infty}(Y_\alpha(t) - c \widetilde Y_{\alpha}(t)) = +\infty.
\]
Moreover, if $N, \widetilde N$ are independent Poisson processes of rate 1 that are independent of $Y_\alpha, \tilde Y_\alpha$ and $\lambda, \mu >0$, then with probability $1$, 
\[
\varliminf_{t \to \infty}(N(\lambda Y_\alpha(t)) -  \widetilde N(\mu\widetilde Y_{\alpha}(t)) = -\infty, \quad \text{ and } \quad   \varlimsup_{t \to \infty}(N(\lambda Y_\alpha(t)) -  \widetilde N(\mu\widetilde Y_{\alpha}(t)) = +\infty.
\]
\end{corollary}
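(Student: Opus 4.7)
The corollary is mostly an explicit restatement of facts already established while proving Theorem~\ref{thm:Rec+Trans}, so my plan is to extract the oscillation statement from there and then transfer it to the Poisson-composed processes via the strong law of large numbers.

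For the first statement, the $\varliminf$ claim is precisely equation~\eqref{eq: liminf}, which was proved for an arbitrary constant $c>0$. There, one forms the auxiliary random walk $S_n = L_\alpha(n) - \widetilde L_\alpha(n/c)$ whose i.i.d.\ increments are strictly stable with mass on both half-axes, invokes oscillation via Example~(a) after Theorem~XII.7.2 in \cite{Feller}, and combines this with the a.s.\ identities $Y_\alpha(L_\alpha(n))=n$ and $\widetilde Y_\alpha(\widetilde L_\alpha(n/c))=n/c$ to force arbitrarily large negative gaps between $Y_\alpha(t)$ and $c\widetilde Y_\alpha(t)$ along a random subsequence $t_k \to \infty$. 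The matching $\varlimsup$ claim then comes by a symmetry trick: using
\[
\varlimsup_{t \to \infty}\bigl(Y_\alpha(t) - c\widetilde Y_\alpha(t)\bigr) = -c\,\varliminf_{t \to \infty}\bigl(\widetilde Y_\alpha(t) - c^{-1}Y_\alpha(t)\bigr)
\]
together with the fact that $Y_\alpha$ and $\widetilde Y_\alpha$ are i.i.d., one more application of the first part with $c$ replaced by $c^{-1}$ (and the two subordinators swapped) yields $+\infty$ almost surely.

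For the second statement I would apply the pathwise strong law of large numbers for a standard rate-$1$ Poisson process: there exists an a.s.-finite random threshold $s_0$ beyond which $\tfrac{1}{2}s \le N(s) \le 2 s$, and similarly for $\widetilde N$. Since $Y_\alpha(t), \widetilde Y_\alpha(t) \to \infty$ a.s.\ as $t \to \infty$ and the four processes $N, \widetilde N, Y_\alpha, \widetilde Y_\alpha$ are jointly independent, this yields the pathwise sandwich
\[
\tfrac{\lambda}{2} Y_\alpha(t) - 2\mu \widetilde Y_\alpha(t) \;\le\; N(\lambda Y_\alpha(t)) - \widetilde N(\mu \widetilde Y_\alpha(t)) \;\le\; 2\lambda Y_\alpha(t) - \tfrac{\mu}{2} \widetilde Y_\alpha(t)
\]
for all sufficiently large $t$, almost surely. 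Taking $\varliminf$ in the upper bound and invoking the first part with $c = \mu/(4\lambda)$ gives $\varliminf = -\infty$; taking $\varlimsup$ in the lower bound and invoking the first part with $c = 4\mu/\lambda$ gives $\varlimsup = +\infty$.

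I do not foresee a genuine obstacle, since the heavy lifting (the Feller oscillation combined with the inverse-subordinator identity trick) was already carried out for Theorem~\ref{thm:Rec+Trans}. The only mild care needed is that the sandwich bounds above hold only eventually and on a full-probability event, but as the whole argument is pathwise this is harmless.
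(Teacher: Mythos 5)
Your proposal is correct and matches the paper's own reasoning: the paper explicitly notes that this corollary is a byproduct of the proof of Theorem~\ref{thm:Rec+Trans}, and the ingredients you list (the oscillation of the random walk $L_\alpha(n)-\widetilde L_\alpha(n/c)$ via Feller, the a.s.\ identities $Y_\alpha(L_\alpha(n))=n$, the exchangeability of $(Y_\alpha,\widetilde Y_\alpha)$ to flip $\varliminf$ into $\varlimsup$, and the pathwise Poisson SLLN sandwich to pass to $N\circ\lambda Y_\alpha-\widetilde N\circ\mu\widetilde Y_\alpha$) are exactly the ones used there, with eq.~\eqref{eq:es}--\eqref{eq:office} and the choice of $c$ at the end of the proof performing the same role as your constants $\mu/(4\lambda)$ and $4\mu/\lambda$.
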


\section{Example: multiclass queueing model with a continuum of classes} 
\label{sec:doubleauction}

Let $\{G_n\}_{n \ge 1}$ be a sequence of i.i.d. random variables on 
$\R_+$ having a common distribution function 
\[ F_G(x) = \P\{ G \le x\}. \]
{\em A priori} no assumptions are made on $G$ except that there is no mass at 0; it does not need to be atomless anywhere else, or have moments, or be unbounded. 

Then we define the following multiclass fractional model. Let 
$N^{\alpha, \lambda}(t)$ denote the FPP that signifies an arrival. When the $k$-th event of $N^{\alpha, \lambda}(t)$ occurs, a customer appears at location $G_k$.
%It is possible that the $k$-th customer will not be at the end of the queue, just because $G_k$ is not the largest of all $G_1, \ldots, G_{k-1}$. This gives a robust definition of ``class" in this model. 
We say that every customer at location $x \in \R_+$ has class $C_x$ and classes are ordered from smallest to largest at any given time. 
If $G$ is not atomless then there is probability that multiple customers accumulate at the atoms of the distribution - in that case they all belong to the same class. 
%$G$ controls the location of the new customer; for any measurable $C \in \R_+$ 
%\[ \P\{G_k \in C \} = \P\{ \text{ customer $k$ has a class in } C\}.\]
When a removal event from the departure process $D^{\beta, \mu}(t)$ occurs, the earliest customer from the smallest class (i.e.\ the one closest to zero) leaves the system. 

\begin{remark} This model is a prototypical, simplified one-sided, continuous double-auction model. The locations of customers in the queue correspond to the locations of `asks' for the auction -- those are prices for which an asset can be bought if someone is willing to pay the price. However, as asks come in, the cheapest assets are sold first. In this spatial multiclass queue, a successful ask corresponds to a removal of the leading customer. As such, the location of the front customer corresponds to the ``best ask" value, and it is a quantity of interest. 
\end{remark}

Let
\[Q_t(x) = \text{number of customers in $(0,x]$ at time $t$}.\]
Then $Q_t(\infty)$ is the total length of the queue, which is still given by the r.h.s.\ of \eqref{eq:fullqueuelength}. Therefore, the scaling theorems about the total queue length or questions about whether the whole queue empties infinitely often, follow from \cite{ButtGeorgiouScalas2022}.

We denote the queue length in any interval $(a, b]$ or $(a, \infty)$ by 
\[
Q_t(a,b) = \text{number of customers in $(a,b]$ at time $t$}.
\]
For any finite partition $ V = \{ c_1, c_2, \ldots, c_{K-1} \}$ of $\R_+$, define probabilities 
\[ 
p^V_1 = \P\{ G \in (0, c_1]\}, \quad  p^V_2 = \P\{ G \in (c_1, c_2]\}, \quad  \ldots,\quad p^V_K = \P\{ G \in (c_{K-1}, \infty)\}. 
\]

Then the arrival processes in each of these sets are also thinned FPPs since we are looking at a discrete set of classes now. In particular, the arrival processes to these (arbitrarily chosen) classes 
\[
(N^{(1)}(t), N^{(2)}(t),  \ldots, N^{(K)}(t)), \quad N^{(i)}(t) \sim { \rm FPP}(\alpha, \lambda (p^V_i)^{1/\alpha}),
\]
are distributed as in Theorem \ref{thm:babyarrival} and satisfy also Theorems \ref{thm:LLN} and \ref{thm:fclt}.
In this setting, a question of interest is the ``best ask" value, i.e. the location of the particle nearest zero at time $t$. That is the smallest class in the system. Define $\mathcal C_{\min}(t)$ by
\[
 C_{\min}(t) = \sup \{ x \in \R_+: Q_t(x) = 0\}.
\]
In the cases where $\alpha \le \beta$ we know that the whole queue empties infinitely often by Theorem \ref{thm:Rec+Trans} (when $\alpha = \beta$) and Theorem 3.4 in \cite{ButtGeorgiouScalas2022} (when $\alpha < \beta$). This gives us a sequence of random times $T_n \to \infty$ such that $C_{\min}(T_n) = \infty$ for all $n \in \N$. 

For this example, we will only show that 
\be
C_{\min}(t) \stackrel{\bP}{\longrightarrow} \inf \{x  \ge 0: F_G(x)>0\}
\ee
as $t \to \infty$ in the case of $\alpha > \beta$.

To see this, denote $a:=\inf \{x: F_G(x)>0\}$. Then $F_G(a+\epsilon)= :p_\epsilon> 0$ for all $\epsilon>0$. Consider the arrival process $N^{\alpha, \lambda}_{\epsilon}(t)$ in the interval $[0, a+\epsilon]$. By Theorem \ref{thm:babyarrival} we have 
\[
N^{\alpha, \lambda}_{\epsilon}(t) \stackrel{d}{=} N_1(p_\epsilon \lambda^\alpha Y_{\alpha}(t)) \sim {\rm FPP}(\alpha, \lambda p_{\epsilon}^{1/\alpha}). 
\]
Then we can estimate 
\begin{align*} 
\varlimsup_{t\to \infty}\P\{ C_{\min}(t) > \epsilon \} &= \varlimsup_{t\to \infty}\P\{ Q_t(a+\epsilon) =0 \} \le \varlimsup_{t\to \infty}\P\{ Q_t(a+\epsilon) \le t^{\beta/2} \}\\
&\le \varlimsup_{t\to \infty}\P\{ N^{\alpha, \lambda}_{\epsilon}(t) - D^{\beta, \mu}(t)  \le t^{\beta/2} \} \\
&= \varlimsup_{t\to \infty}\P\{ e^{sN^{\alpha, \lambda}_{\epsilon}(t) - sD^{\beta, \mu}(t)}  \ge e^{s t^{\beta/2}} \}
\end{align*}
for any $s<0$. By \eqref{eq:mgf} and the exponential Markov inequality, 
\begin{align*}
\varlimsup_{t\to \infty}\P\{ C_{\min}(t) > \epsilon \} & \le \varlimsup_{t\to \infty}  e^{-st^{\beta/2}}E_{\alpha}((e^{s}-1)\lambda^\alpha p_{\epsilon}t^{\alpha}) E_{\beta}((e^{-s}-1)\mu^{\beta}t^\beta).  
\end{align*}
We can put $s = - t^{-\beta}$ and get the bound 
\begin{align*}
\varlimsup_{t\to \infty}\P\{ C_{\min}(t) > \epsilon \} & \le \varlimsup_{t\to \infty}  e^{t^{-\beta/2}}E_{\alpha}((e^{-t^{-\beta}}-1)\lambda^\alpha p_{\epsilon}t^{\alpha}) E_{\beta,1}((e^{t^{-\beta}}-1)\mu^{\beta}t^\beta)\\
& \le \varlimsup_{t\to \infty}  e^{t^{-\beta/2}}E_{\alpha}(-\lambda^\alpha p_{\epsilon}t^{\alpha -\beta} +o(t^{\alpha-\beta})) E_{\beta}(\mu^{\beta} + o(1)) = 0.
\end{align*}
This suffices for the convergence in probability. 

Moreover, the estimates above imply that 
$ \lim_{t\to \infty}\P\{ Q_t(a+ \epsilon) > t^{\beta/2} \} =1$. This not only suggests that the customers are dense around 0 with high probability, but also implies that 
\[ Q_t(a+\epsilon) \stackrel{\P}{\longrightarrow} + \infty \quad \text{as } t \to \infty.\]

\appendix 

\section{Auxiliary results}
\label{app:A}

In this small appendix we recall the three theorems from  \cite{ButtGeorgiouScalas2022} that are used here, using the notation of this article. Also note, at the time we called the queueing model 3 in \cite{ButtGeorgiouScalas2022} the GI/GI/1 queue, but after the classifications of article \cite{AscioneCaputo}, we are calling queues with renewal-type departures and the possibility of completion of empty services, as `restless'. 

In this respect, the results below are for the single-class, restless Mittag-Leffler queue. The single-class queue length is denoted by  $Q^{\alpha, \beta}_{\lambda, \mu}$.

\begin{theorem}[Theorem 3.3. in  \cite{ButtGeorgiouScalas2022}]\label{thm:33}
Let $Y_{\alpha}(t)$ and $\widetilde Y_{\alpha}(t)$ be two independent inverse standard $\alpha$-stable subordinators, and let $\gamma = \max\{\alpha, \beta\}$.
Let  $\Phi$ be given by~\eqref{eq:Phi}. Then we have the weak convergence
\begin{align}
\left\{ \frac{Q^{\alpha, \beta}_{\lambda, \mu}(u t)}{u^\gamma} \right \}_{t \ge 0}&\stackrel{d}{\longrightarrow} \displaystyle 
\begin{cases} 
\displaystyle \lambda^\alpha  Y_{\alpha}, & \quad \gamma = \alpha >\beta,\\
0, &\quad \gamma = \beta > \alpha, \\ 
\displaystyle  \Phi\big(\lambda^\alpha  Y_\alpha - \mu^\alpha \widetilde Y_\alpha \big), &\quad \gamma = \alpha = \beta,
\end{cases}
\label{eq:Qilimitapp}
\end{align}
in the space $D[0, \infty)$ equipped with the Skorokhod topology $J_1$, as $u \to \infty$ (above, $0$ is as a function). 
\end{theorem}

\begin{theorem}[Theorems 3.4. and 3.5. in  \cite{ButtGeorgiouScalas2022}] \label{thm:3435}
Denote the single-class queue length by  $Q^{\alpha, \beta}_{\lambda, \mu}(t)$. Then for any $\lambda, \mu >0$ we have that 
\begin{enumerate}
	\item If $\alpha < \beta$ then $ \P\{ Q^{\alpha, \beta}_{\lambda, \mu}(t) = 0 \text{ \, i.o.} \} = 1.$
	\item If $\alpha > \beta$ then $ \P\{ \varlimsup_{t \to \infty} Q^{\alpha, \beta}_{\lambda, \mu}(t) = +\infty \} = 1.$
\end{enumerate} 

Moreover, if $\alpha =\beta$, then 
\begin{enumerate}
	\item If $\lambda \le \mu$ then $ \P\{ Q^{\alpha, \beta}_{\lambda, \mu}(t) = 0 \text{ \, i.o.} \} = 1.$
	\item If $\lambda \ge \mu$ then  $ \P\{ \varlimsup_{t \to \infty} Q^{\alpha, \beta}_{\lambda, \mu}(t) = +\infty \} > c_{\lambda, \mu},$ for some positive constant $c_{\lambda, \mu}$.
\end{enumerate} 
\end{theorem}

\begin{remark} Theorem \ref{thm:Rec+Trans} generalises the second part of Theorem \ref{thm:3435} above (alternatively, it generalises Theorem 3.5. of \cite{ButtGeorgiouScalas2022}. When $\alpha=\beta$ then irrespective of the values of $\lambda, \mu$ the queue empties infinitely often with probability 1. Moreover,  $c_{\lambda, \mu} = 1$, again irrespectively of $\lambda, \mu$. 
\end{remark} 

\begin{theorem}
[Lemma 6.2. in \cite{ButtGeorgiouScalas2022}]\label{thm:62}
The following are equivalent 
\begin{enumerate}
	\item Time $T$ is a discontinuity of the infimum process, i.e. 
	\[ \inf_{0 \le s \le T} \{ N^{\alpha, \lambda}(s) - D^{\beta, \mu}(s) \} = \inf_{0 \le s < T} \{ N^{\alpha, \lambda}(s) - D^{\beta, \mu}(s) \}-1 \]
	
	\item The departure process $D^{\beta, \mu}(s)$ has a renewal point at time $T$ and the queue length $Q^{\alpha, \beta}_{\lambda, \mu}(t)$ satisfies 
	\[
	Q^{\alpha, \beta}_{\lambda, \mu}(T-) = Q^{\alpha, \beta}_{\lambda, \mu}(T) = 0.
	\]
\end{enumerate}
In other words, at time $T$ there was an unused service time if and only if the infimum process jumped down.
\end{theorem}

%
%\bibliographystyle{sn-mathphys-num.}
%\bibliography{sn-bibliogrpahy}

\end{document}